\def\namedlabel#1#2{\begingroup
  #2%
  \def\@currentlabel{#2}%
  \label{#1}\endgroup
}
\newcommand{\vertiii}[1]{{\left\vert\kern-0.25ex\left\vert\kern-0.25ex\left\vert #1 
    \right\vert\kern-0.25ex\right\vert\kern-0.25ex\right\vert}}
\newtheorem{theo}{Theorem}
\newtheorem{lem}{Lemma}
\newtheorem{prop}{Proposition}
\begin{document}

\title{Anisotropic residual based a posteriori mesh adaptation in 2D: element based approach}

\newcommand*\samethanks[1][\value{footnote}]{\footnotemark[#1]}
\author{Edward Boey
  \thanks{Department of Mathematics and Statistics, University of
      Ottawa, 585 King Edward Avenue, Ottawa, ON, Canada, K1N
      6N5}
  \thanks{Corresponding author email: eboey041@uottawa.ca}
  \and Yves Bourgault\samethanks[1]
  \and Thierry Giordano\samethanks[1]
}
\date{}

\renewcommand{\thefootnote}{\fnsymbol{footnote}} 

%
\maketitle

\begin{abstract}
  An element-based adaptation method is developed for an anisotropic a
  posteriori error estimator. The adaptation does not make use of a metric, but
  instead equidistributes the error over elements using local mesh
  modifications. Numerical results are reported, comparing with three popular
  anisotropic adaptation methods currently in use. It was found that the new
  method gives favourable results for controlling the energy norm of the error
  in terms of degrees of freedom at the cost of increased CPU usage.
  Additionally, we considered a new $L^2$ variant of the estimator. The
  estimator is shown to be conditionally equivalent to the exact $L^2$ error.
  We provide examples of adapted meshes with the $L^2$ estimator, and show that
  it gives greater control of the $L^2$ error compared with the original
  estimator.
\end{abstract}





\section{Introduction}
\label{sec:intro} 

In the last twenty years, anisotropic mesh adaptation has seen great activity.
Since the work of D'Azevedo and Simpson in \cite{dazsim89} and \cite{daz91} for
piecewise linear approximation of quadratic functions there has been a
significant amount of research dedicated to producing practical adaptation
procedures based on their results. In addition, there has been much software
written for the implementation, which either construct an entirely new mesh,
such as BAMG \cite{hec06}, BL2D \cite{laubou03}, GAMANIC3D \cite{geo03}, or
apply local modifications to a previous mesh, such as MEF++ \cite{gir}, MMG3D
\cite{dobthes}, YAMS \cite{yams}. The main idea they share in common is to
construct a non-Euclidean metric from the Hessian of the solution. We will refer
to them as Hessian adaptation methods, see for instance \cite{losala11a},
\cite{losala11b}, \cite{fregeo08}, \cite{borgeoheclausal97},
\cite{habdombouaitforval00}.

Residual a posteriori error estimation for elliptic equations has been around
for some time. In \cite{babrhe78a} and \cite{babrhe78b}, Babuska and Rheinboldt
introduced a local estimator, constructed entirely from the approximate
solution, that is globally equivalent to the energy norm of the error.
Numerical results showed that it was suitable for the purposes of mesh
adaptation by determining regions in which the mesh could be refined or
coarsened. While initially an entirely isotropic method, recently, the residual
method was modernized by the introduction of anisotropic interpolation estimates
from \cite{forper01}. Unlike classical results, the new estimates did not
require a minimum (or maximum) angle condition, and instead took into account
the geometric properties of the element. In \cite{pic03a} and \cite{forper03}
these interpolation results were combined with the standard a posteriori
estimates to drive mesh adaptation by constructing a metric. We will refer to
this method as the residual metric method. The method results in highly
anisotropic meshes, reducing the error by an order of magnitude compared to
isotropic methods \cite{pic03a}. Moreover, the procedure has been
successfully applied to a variety of nonlinear situations, including a
reaction-diffusion system to model solutal dendrites in \cite{burpic03} and
the Euler equations to model the supersonic flow over an aircraft in
\cite{boupicalalos09}.

\begin{sloppypar}
  Recent work in \cite{boiforfor12} demonstrates the potential advantages of
  element-based anisotropic mesh adaptation over the usual metric based mesh
  adaptation methods used so far. The error estimator they use is hierarchical:
  from a given approximate solution, they construct a higher-order, more
  accurate approximation. For the Hessian method it is necessary to take the
  absolute value of the eigenvalues of the Hessian, thus treating positive and
  negative curvature as essentially equal, while the distinction can be seen
  very clearly in meshes adapted with the hierarchical method. Further, the
  hierarchical estimator has the advantage that it can naturally be applied to
  finite elements of arbitrary order.
\end{sloppypar}

The primary goal of this paper is to introduce, and numerically assess, an
element-based adaptation approach to be used with the residual estimator from
\cite{pic03a}. We will refer to this method as the element-based residual
method. Motivation for implementing such a method includes avoiding the
additional steps involved in converting the estimator defined on elements, to a
metric defined on the nodes, during which information could be lost.
Additionally, we would like to attempt to mimic the success of the hierarchical
method. The adaptation will be implemented by interfacing the estimator with
the hierarchical adaptation code MEF++. We also introduce a variant of the
estimator for the $L^2$ norm error, which is shown to be reliable and efficient
under certain assumptions, and show that the estimator is also suitable for
anisotropic mesh adaptation. A secondary goal of the paper will be to provide a
comparative performance analysis between four different adaptation techniques:
element-based residual, metric based residual, Hessian, and hierarchical.

The outline of this paper is as follows: in Section 2 we introduce the model
problem and error estimator, as well as recall some results from the literature;
in Section 3 we discuss both the metric and element-based adaptation procedures;
in Section 4 we produce numerical results, validating the element-based method,
and comparing it with other anisotropic adaptation procedures.


\section{The estimator}
\label{sec:est}

We discuss the model problem and introduce a residual estimator. Main results
will be summarized from the literature. Full details can be found for instance
in \cite{forper01}, \cite{pic03a}, and \cite{micper06}.


\subsection{Model problem}
\label{subsec:ellipt}


Let $\Omega\subseteq\mathbb{R}^2$ be a bounded polygonal domain, with boundary
$\partial\Omega.$ Let $V=H^1(\Omega)$ and
$V_0=\{v\in H^1(\Omega):v|_{\partial\Omega}=0\}$. For
$g\in H^{1/2}(\partial\Omega)$, let
$V_g=\{v\in H^1(\Omega):v|_{\partial\Omega}=g\}$, which may be thought of as the
translation of $V_0$ by $g$. For $f\in L^2(\Omega),$ and a positive definite
matrix $A$, let $u\in V_g$ be the solution of the equation
\begin{equation}\label{eq-lap}
  \left\{\begin{array}{ll}
      -\text{div}( A\nabla u)=f,&\quad \text{in }\Omega, \\
      u=g,&\quad\text{on }\partial\Omega.
    \end{array}\right.
\end{equation}
Then $u$ is the solution to the variational equation
\begin{equation*}
  B(u,v)=F(v),\quad\quad\forall v\in V_0,
\end{equation*}
where
\begin{align*}
  B(u,v)&=\int_\Omega A\nabla u\cdot \nabla v\,\,\mathrm{d}x,\quad\quad u\in V_g,v\in V_0,\\
  F(v)&=\int_\Omega f v\,\,\mathrm{d}x,\quad\quad v\in V_0.
\end{align*}


For $h>0$, let $\mathcal{T}_h$ be a conformal triangulation of $\Omega$
consisting of triangles $K$ with diameter $h_K\leq h.$ Denote by $V_{h}$ the
finite element space of continuous, piecewise linear functions ($P_1$) on
$\mathcal{T}_h$ and $V_{h,0}$ the subspace of functions vanishing on
$\partial\Omega$. Let $g_h$ be a piecewise linear approximation of $g$ on
$\partial\Omega$ and let $V_{h,g}=\{v_h\in V_h:v_h|_{\partial\Omega}=g_h\}.$
Then the finite element approximation $u_h\in V_{h,g}$ of $u$ satisfies the
discrete variational equation
\begin{equation}\label{eq-var-disc}
  B(u_h,v_h)=F(v_h),\quad\quad\forall v_h\in V_{h,0}.
\end{equation}
For details on the finite element method for elliptic problems, see for instance
\cite{quaval97}.


\subsection{Anisotropic residual error estimator}
\label{subsec:interp}

\begin{sloppypar}
  Define the energy norm by $\vertiii{v}=B(v,v)^{1/2}$ for $v\in V$. The
  residual mesh adaptation procedure is based on controlling the energy norm of
  the discretization error $e_h=u-u_h$. The error estimator, which will be
  outlined below, combines information of the residual with anisotropic
  interpolation estimates.
\end{sloppypar}

Define the localized residual by
\begin{equation*}
  R_K(u_h)=f+\text{div}(A\nabla u_h),
\end{equation*}
where the divergence operator is local to $K$. The jump of the derivative for an
element $K$ with edges $e_i$ is defined by
\begin{equation*}
  r_K(u_h)=\sum_{i=1}^3[A\nabla u_h]_{e_i},
\end{equation*}
where the jump $[A \nabla v_h]_{e_i}$ over $e_i$ is defined as follows: denoting
the outward unit normal by $n_i$ and the adjacent element (if it exists) by
$K'$, then
\begin{equation*}
  [A \nabla v_h]_{e_i}=\left\{\begin{array}{ll}
      0,&\quad e_i\in\partial\Omega,\\
      A \nabla (v_h)|_{K}\cdot n_i - A \nabla (v_h)|_{K'}\cdot n_i,&\quad \text{otherwise}.
    \end{array}\right.
\end{equation*}

For a triangular element $K$, the anisotropic information comes from the affine
mapping $F_K:\hat{K}\rightarrow K$. The reference element $\hat{K}$ is taken to
be the equilateral triangle centred at the origin with vertices at the points
$(0,1),\,(\frac{-\sqrt{3}}{2},\frac{-1}{2}),\,(\frac{\sqrt{3}}{2},\frac{-1}{2})$.
The Jacobian $J_K$ of $F_K$ is non-degenerate, so the singular value
decomposition (SVD) $J_K=\mathcal{R}_K^T\Lambda_K\mathcal{R}_K \mathcal{Z}_K$
consists of orthogonal matrices $\mathcal{R}_K,\,\mathcal{Z}_K$, and positive
definite diagonal matrix $\Lambda_K$. The matrices $\mathcal{R}_K,\,\Lambda_K$
take the form
\begin{align*}
  \mathcal{R}_K=\begin{pmatrix}r_{1,K}^T\\r_{2,K}^T\end{pmatrix},
  \quad\quad\quad
  \Lambda_K=\begin{pmatrix}\lambda_{1,K} && 0\\0 && \lambda_{2,K}\end{pmatrix},
\end{align*}
where $\lambda_{1,K}\geq\lambda_{2,K}>0$, $r_{1,K},\,r_{2,K}$ are orthogonal
unit vectors. Geometrically, these eigenvalues and eigenvectors represent the
deformation of the unit ball in $\mathbb{R}^2$ to an ellipse with axes of length
$\lambda_{1,K},\,\lambda_{2,K}$ in directions $r_{1,K},\,r_{2,K}$ respectively.
Moreover, they represent $K$ in the sense that the ellipse circumscribes the
element.

Denote by $\Delta_K$ the patch of elements containing a vertex of $K.$ As noted
in \cite{micperpic03}, for the bounds for the quasi-interpolation operator to be
uniform, there must be an integer $\Gamma>0$ and a constant $C>0$ such that all
such patch satisfies $\text{card}(\Delta_K)\leq\Gamma$ (cardinality) and
$\text{diam}(F_K^{-1}(\Delta_K))\leq C$ (diameter). For $v\in V,$ define the
following ``Hessian'' type matrix:
\begin{equation}\label{eq-G}
  \tilde{G}_K(v)=
  \left(\int_{\Delta_K}\frac{\partial v}{\partial x_i}\frac{\partial v}{\partial x_j}\,\,\mathrm{d}x\right)_{i,j},
\end{equation}
and let
\begin{equation*}
  \tilde{\omega}_K(v)=(\lambda_{1,K}^2r_{1,K}^T \tilde{G}_K(v)r_{1,K}+\lambda_{2,K}^2r_{2,K}^T \tilde{G}_K(v)r_{2,K})^{1/2},
\end{equation*}
Finally, define
\begin{align}
  \hat{\eta}_K^2 
  &=\left(\|R_K(u_h)\|_{0,K} +
    \left(\frac{h_K}{\lambda_{1,K}\lambda_{2,K}}\right)^{1/2}\|r_K(u_h)\|_{0,\partial
      K}\right)\tilde{\omega}_K(e_h).\label{eq-est-aniso-norec}
\end{align}

\begin{theo}[\cite{pic03a}\cite{pic06}\cite{micper06}]
  \label{lem-est-aniso}
  There exist constants $C_{1,\hat{K}},\,C_{2,\hat{K}}>0$ such that
  \begin{align*}
    C_{1,\hat{K}} \sum_K\hat{\eta}_K^2\leq \vertiii{e_h}^2\leq C_{2,\hat{K}} \sum_K\hat{\eta}_K^2.   
  \end{align*}
\end{theo}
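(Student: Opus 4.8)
The statement packages a reliability bound (the upper estimate $\vertiii{e_h}^2\le C_{2,\hat K}\sum_K\hat\eta_K^2$) and an efficiency bound (the lower estimate). The plan is to drive both directions with the Cl\'ement-type quasi-interpolation operator $I_h\colon V_0\to V_{h,0}$ of \cite{forper01}, whose anisotropic local estimates
\[
  \|v-I_hv\|_{0,K}\le C_{\hat K}\,\tilde\omega_K(v),
  \qquad
  \|v-I_hv\|_{0,\partial K}\le C_{\hat K}\Big(\tfrac{h_K}{\lambda_{1,K}\lambda_{2,K}}\Big)^{1/2}\tilde\omega_K(v)
\]
hold with constants depending only on $\hat K$ and on the patch bounds $\Gamma,C$, and require no minimum-angle condition. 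These two estimates are precisely tailored to reproduce the two summands in $\hat\eta_K$, so I would recall them first.

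For the upper bound I would begin from Galerkin orthogonality: since $B(e_h,v_h)=0$ for all $v_h\in V_{h,0}$, choosing $v_h=I_he_h$ gives
\[
  \vertiii{e_h}^2=B(e_h,e_h-I_he_h)=F(e_h-I_he_h)-B(u_h,e_h-I_he_h).
\]
Integrating by parts on each element rewrites the right-hand side through the interior residual $R_K(u_h)$ and the edge jumps; regrouping the edge contributions per element and applying Cauchy--Schwarz yields
\[
  \vertiii{e_h}^2\le\sum_K\Big(\|R_K(u_h)\|_{0,K}\,\|e_h-I_he_h\|_{0,K}+\|r_K(u_h)\|_{0,\partial K}\,\|e_h-I_he_h\|_{0,\partial K}\Big).
\]
Inserting the two interpolation estimates above collapses each factor $\|e_h-I_he_h\|$ into $\tilde\omega_K(e_h)$ with the correct scaling, producing exactly $C_{\hat K}\sum_K\hat\eta_K^2$; the finite-overlap bound $\mathrm{card}(\Delta_K)\le\Gamma$ controls the patches entering $\tilde\omega_K$.

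For the lower bound I would use the bubble-function technique of Verf\"urth, rescaled anisotropically. With the interior bubble $b_K$ set $w_K=R_K(u_h)\,b_K$; then $\int_K R_K(u_h)\,w_K\,\mathrm{d}x=B(e_h,w_K)$ by the weak formulation, and the anisotropic inverse inequality $\|\nabla w_K\|_{0,K}\lesssim\lambda_{2,K}^{-1}\|w_K\|_{0,K}$ (obtained by pulling $w_K$ back to $\hat K$ through $J_K=\mathcal R_K^T\Lambda_K\mathcal R_K\mathcal Z_K$) bounds each interior residual by the local energy error, modulo higher-order data oscillation of $f$. An analogous edge-bubble argument treats the jump $r_K(u_h)$. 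Summing the local bounds and again using the finite overlap of the patches gives $C_{1,\hat K}\sum_K\hat\eta_K^2\le\vertiii{e_h}^2$.

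The central difficulty throughout is anisotropy robustness: every constant must depend only on $\hat K$ and never on the aspect ratio $\lambda_{1,K}/\lambda_{2,K}$. For the reliability bound this difficulty is entirely absorbed into the interpolation estimates of \cite{forper01}, whose proof is the genuinely anisotropy-specific and hardest ingredient. For the efficiency bound the delicate step is matching the inverse-inequality scaling $\lambda_{2,K}^{-1}$ against the error-dependent weight $\tilde\omega_K(e_h)$: since this weight is assembled from $\tilde G_K(e_h)=\int_{\Delta_K}\nabla e_h\otimes\nabla e_h$, it is the orientation of $\nabla e_h$ relative to $r_{1,K},r_{2,K}$ that must be exploited to keep $\lambda_{1,K}/\lambda_{2,K}$ from surfacing. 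I would expect the efficiency direction to be the most fragile part, the place where the structural role of the error-dependent weight and the hypotheses on $\Delta_K$ are indispensable.
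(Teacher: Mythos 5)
First, note that the paper itself does not prove this theorem: it is imported verbatim from \cite{pic03a}, \cite{pic06} and \cite{micper06}, so the only meaningful comparison is with those references. Your reliability argument is exactly the one used there: Galerkin orthogonality, element-by-element integration by parts, Cauchy--Schwarz, and the two anisotropic Cl\'ement-type estimates of \cite{forper01} (the $L^2$ bound on $K$ and the scaled trace bound on $\partial K$), whose right-hand sides are precisely $\tilde\omega_K(e_h)$ and $(h_K/(\lambda_{1,K}\lambda_{2,K}))^{1/2}\tilde\omega_K(e_h)$. That direction is complete, up to the usual bookkeeping for the factor $1/2$ on interior edges and for the boundary datum $g_h\neq g$.

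The efficiency direction, however, contains a genuine gap at exactly the point you flag. The anisotropic bubble argument gives, at best,
\[
\|R_K(u_h)\|_{0,K}+\Big(\tfrac{h_K}{\lambda_{1,K}\lambda_{2,K}}\Big)^{1/2}\|r_K(u_h)\|_{0,\partial K}\;\lesssim\;\lambda_{2,K}^{-1}\big(|e_h|_{1,\Delta_K}+\mathrm{osc}\big),
\]
while the only unconditional bound on the weight is $\tilde\omega_K(e_h)\le\lambda_{1,K}|e_h|_{1,\Delta_K}$. Multiplying these leaves $\hat\eta_K^2\lesssim(\lambda_{1,K}/\lambda_{2,K})\,|e_h|^2_{1,\Delta_K}$, so the aspect ratio survives. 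To cancel it you need $\tilde\omega_K(e_h)\lesssim\lambda_{2,K}|e_h|_{1,\Delta_K}$, i.e. $\lambda_{1,K}^2\,r_{1,K}^T\tilde G_K(e_h)r_{1,K}\lesssim\lambda_{2,K}^2\,\mathrm{tr}\,\tilde G_K(e_h)$, which is an alignment hypothesis on the mesh relative to $\nabla e_h$, not a consequence of the definitions; ``exploiting the orientation of $\nabla e_h$'' is precisely the assumption you would have to add. This is why the elementwise lower bound in \cite{micper06} is stated on patches and under hypotheses (cf. the paper's own remark following (\ref{eta-equiv-elem})), and why related anisotropic efficiency results carry a matching or alignment factor in the constant. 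Note also that your closing intuition is backwards on one point: the error-dependent weight $\tilde G_K(e_h)$ is what makes the \emph{reliability} constant aspect-ratio robust, whereas for \emph{efficiency} it works against you, since a misaligned mesh inflates $\tilde\omega_K(e_h)$ and hence the left-hand side. As written, your lower-bound argument does not close; you must either import the alignment condition explicitly or accept a constant containing $\max_K\lambda_{1,K}/\lambda_{2,K}$.
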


The upper and lower bounds in Theorem \ref{lem-est-aniso} still depend on the
unknown solution due to the $\tilde{\omega}_K(e_h)$ term. The approach taken in
\cite{pic03a} is to remove this dependency by using a gradient recovery operator
of the form $\Pi:V_h\rightarrow V_h\oplus V_h.$ The operator should be
super-convergent in the sense that $\Pi(u_h)$ converges to $\nabla u$ faster
than $\nabla u_h$, at least of order $1+\epsilon$ where $\epsilon>0$. For full
details on the derivation of upper and lower bounds using super-convergence
assumptions, see \cite{micper06}. Therefore, for the remainder we replace
$\tilde{G}_K(e_h)$ by
\begin{equation*}
  G_K(u_h)=\left(\int_K\left(\frac{\partial u_h}{\partial x_i} - \Pi(u_h)_i\right)\left(\frac{\partial u_h}{\partial x_j} - \Pi(u_h)_j\right)\,\mathrm{d}x\right)_{i,j},
\end{equation*}
and $\tilde{\omega}_K(e_h)$ by
$\omega_K(u_h)=(\lambda_{1,K}^2r_{1,K}^T
G_K(u_h)r_{1,K}+\lambda_{2,K}^2r_{2,K}^T G_K(u_h)r_{2,K})^{1/2}$ and
the estimator becomes
\begin{align}
  \eta_K^2 
  &=\left(\|R_K(u_h)\|_{0,K} +
    \left(\frac{h_K}{\lambda_{1,K}\lambda_{2,K}}\right)^{1/2}\|r_K(u_h)\|_{0,\partial
      K}\right)\omega_K(u_h).\label{eq-est-aniso}
\end{align}
Note furthermore, that the integral for the matrix $\tilde{G}_K(e_h)$ is taken
on the patch $\Delta_K$ while that for $G_K(u_h)$ is taken only on the element
$K$. We found simplification works in practice and greatly reduces the
computational complexity of the estimator, and has been used for instance in
\cite{pic03a}, \cite{lozpicpra09}.


\subsection{Gradient recovery}
\label{subsec:recov}

Here we discuss briefly our choice of gradient recovery method. A popular choice
is the simplified Zienkiewicz-Zhu (ZZ) operator, see \cite{rod94}, which
generally performs very well. For instance, on certain regular meshes (parallel)
it is asymptotically exact. Moreover, despite the fact that it cannot be proven
to be super-convergent for non-regular meshes, in practice superconvergence has
been observed for adapted meshes, as in \cite{pic03a} and \cite{micper06}.

An improved method is proposed by Zhang and Naga in \cite{zhanag05}. The main
idea is that for each node, one fits the solution values to a higher-order
polynomial on a surrounding patch, the fit being obtained in a least-square
sense. The value of the recovered gradient at the node is obtained by taking the
gradient of the higher-order polynomial. They prove that the method is
super-convergent for any regular mesh pattern, including situations where the ZZ
estimator is not, such as the chevron pattern \cite{rod94}. In addition,
while the ZZ estimator only preserves polynomials of degree 1, their method can
be extended to higher-order elements.

In this paper we have chosen to use the recovery method of Zhang and Naga due to
an observed increase in performance. We remark that the usual justification of
use of the ZZ estimator is its low cost. However, the gradient recovery is only
computed once at the start of each iteration of the adaptation loop. As it
turns out in our case, calculation of the Zhang/Naga gradient recovery accounted
for less than $0.5$\% of the total CPU time.


\section{Adaptive procedure}
\label{sec:meth}

In this section, we describe the four mesh adaptation methods that will be
compared in Section \ref{sec:numer}, starting with the new, element-based
adaptation procedure for the residual estimator $\eta_K.$ The section concludes
with a discussion of the control of the $L^2$ norm error vs. the $H^1$
seminorm error.


\subsection{Element-based adaptation}
\label{subsec:elem}

\begin{sloppypar}
  By Theorem \ref{lem-est-aniso}, the estimator
  $\eta=\left(\sum_K\eta_K^2\right)^{1/2}$ is globally equivalent to the energy
  norm of the error $\vertiii{e_h}$. Given an error tolerance $TOL>0,$ the
  adaptation algorithm will attempt to control the error so that
  $\eta\approx TOL$. Moreover, the mesh should have the least possible number of
  elements $N_T$. Therefore, the primary goal of the adaptation algorithm is to
  equidistribute the estimated error by asking that every element $K$ satisfies
  $\eta_K^2\approx\frac{TOL^2}{N_T}$. From an initial calculation of $\eta_K$ we
  adapt the mesh by performing the following local mesh modifications: edge
  refinement, edge swapping, node removal, and node displacement. For a complete
  description of local mesh modifications, see for instance \cite{boiforfor12},
  \cite{dasforper15}, \cite{fregeo08}, \cite{habdombouaitforval00}.
\end{sloppypar}


For convenience we define two element patches, which will be referred to
frequently while discussing the local modifications. For an edge $e$, the patch
$\Delta_e$ will denote the patch of elements containing $e$. Similarly, for
a vertex $p$, the patch $\Delta_p$ will denote the patch of elements containing
$p$.


\subsubsection{Edge Refinement}
\label{subsubsec:ref}

Edge refinement is used to decrease the level of error where it is too large.
The candidate edges for refinement are those belonging to an element $K$ for
which $\eta_K^2> 1.5\frac{TOL^2}{N_T}.$ For such an edge $e$ with associated
edge patch $\Delta_e$, denote by $\Delta_e'$ the resulting patch after refining
$e$, and suppose they have respectively $N_{T,e},$ $N_{T,e'}$ elements. Denote
respectively by $\eta_{\Delta_e}^2$ and $\eta_{\Delta_e'}^2$ the error on the
patch before and after refinement. The refinement is accepted if the new error
is closer to the goal in the following sense:
\begin{align}\label{ineq-ref}
  \left|\frac{\eta_{\Delta_e'}^2}{N_{T,e'}}-\frac{TOL^2}{N_{T}}\right| < \left|\frac{\eta_{\Delta_e}^2}{N_{T,e}}-\frac{TOL^2}{N_{T}}\right|.
\end{align}


\subsubsection{Edge swapping}
\label{subsubsec:swap}

Edge swapping is used to minimize the error without changing the number of
elements. For an internal edge $e$, consider the edge patch $\Delta_e$, test the
reconnection of the edge, and denote this patch $\Delta_{e'}$. Note that it may
be geometrically impossible to swap an edge, for instance if the patch is not
convex, or degenerates to a triangle. Edge swapping is performed if the global
error decreases. At first, one might try swapping if the following criterion
holds:
\begin{equation*}
  \sum_{K'\in\Delta_{e'}}\eta_{K'}^2 < \sum_{K\in\Delta_e}\eta_K^2.
\end{equation*}
However, for the residual estimator, the above criteria is not enough, and we
had to enlarge the patch as in Figure \ref{fig:ext-patch}. Note that swapping
the edge changes the normal jump of the derivative for elements adjacent to
$\Delta_e$. Including these elements in the error calculation means that we
have included all elements for which $\eta_K$ is changed by swapping, so that if
the error decreases on the patch, then in fact the error will have decreased
globally.

\begin{figure}[H]
  \centering
  \includegraphics[width=0.35\textwidth]{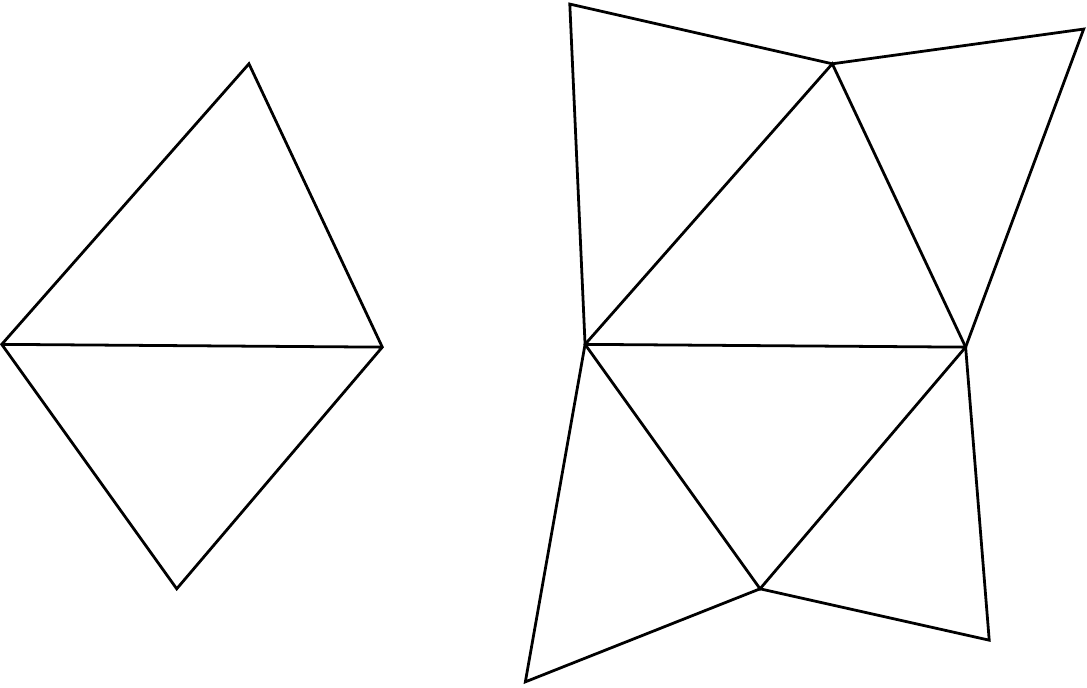}
  \caption{Extension of the edge patch for edge swapping.}
  \label{fig:ext-patch}
\end{figure}

The edges are stored in an ordered list, and edge swapping is carried out by
looping over this list and checking all internal edges. When an edge is
swapped, the new edge is placed at the end of the list, so will be considered
for swapping again. After the list is exhausted, if any edges were
swapped the entire procedure will be repeated.

We remark that the loop will in fact terminate. There are only finitely many
ways to reconfigure the mesh by swapping, and the edges are only swapped if the
global error decreases. Furthermore, because no interpolation of the solution
takes place during edge swapping, the map from edge configuration to global
error calculation is well-defined, so it is not possible to arrive at a previous
configuration but with smaller error. Also, as noted on p. 1339 of
\cite{pic03a}, the choice of reference element means that the contribution of
the SVD will not depend on a reordering of the nodes.


\subsubsection{Node removal}
\label{subsubsec:rem}

Node removal is used to reduce the number of mesh elements where possible,
particularly where the error is small. Node removal consists in removing a node
$p$ from the mesh, as well as the patch of elements, $\Delta_p$, attached to the
node. The resulting ``hole'' then is remeshed, and we will call the resulting
patch $\Delta_p'$. The initial choice of remeshing is not important because the
optimal choice will be determined by edge swapping. One compares the error
before and after the procedure, denoted $\eta_{\Delta_p},$ $\eta_{\Delta_p'}$,
and the node is accepted for removal if the following analogue to
(\ref{ineq-ref}) holds
\begin{align}\label{ineq-deref}
  \left|\frac{\eta_{\Delta_p'}^2}{N_{T,p'}}-\frac{TOL^2}{N_{T}}\right| <
  \left|\frac{\eta_{\Delta_p}^2}{N_{T,p}}-\frac{TOL^2}{N_{T}}\right|.
\end{align}


\subsubsection{Node displacement}
\label{subsubsec:disp}

The goal of node displacement is to equidistribute the error over the mesh
elements. Node displacement is applied to each vertex $p$ to determine the
optimal position of the vertex within the vertex patch $\Delta_p$. Note that
this patch might not be convex, so care has to be taken to avoid overlapping
elements. We consider the value of the error on the elements as a discrete
distribution, and find the position within the patch which minimizes the
variance: $\min_p\left(\text{Var}_{K\in\Delta_p}\{\eta_K^2\}\right).$ No attempt
is made to solve the minimization problem fully for each vertex, but only to
find an approximate solution with one iteration of a gradient recovery method.
Computing the full solution could be costly, and moreover might not even be
possible depending on the shape of the function being minimized. Instead, one
applies several iterations of the global node displacement procedure. As we
will see in Section \ref{subsec:hier}, node movement minimizes a different
function in the hierarchical method from \cite{boiforfor12}.


Edge refinement and node removal work towards achieving the error tolerance,
node movement ``smooths'' the mesh by equidistributing the error, while edge
swapping minimizes the error.


%
  

After a mesh operation is performed the error estimator needs to be
recalculated. First we interpolate the continuous data, which in this case is
$u_h$ and the recovered gradient $\Pi(u_h).$ The discontinuous data needs to be
recalculated on each element, i.e. the singular value decomposition,
discontinuous gradient, jump of the derivative, and the residual. Additionally,
after each operation is performed there is a check to ensure degenerate elements
were not produced.

All numerical results are produced with MEF++. The hierarchical estimator
adaptation driver was used, described in \cite{boiforfor12}, suitably adjusted.


\subsection{Hierarchical}
\label{subsec:hier}

We summarize the ideas from \cite{boiforfor12}. Given a $P_k$ approximation
$u_{h,k},$ construct a higher-order solution $P_{k+1}$, $\tilde{u}_{h,k+1}$,
which is supposed to be more accurate. From this, one obtains an approximation
of the error
\begin{equation}\label{eq-err-hier}
  e_h\approx \tilde{u}_{h,k+1}-u_{h,k}.
\end{equation}
Taking $k=1$, and the barycentric representation of the element $K$ by
$u_{h,1}|_K=u_1\lambda_1 + u_2\lambda_2 + u_3\lambda_3$,
one builds $\tilde{u}_{h,2}$ in the ``hierarchical'' basis
\begin{equation*}
  \tilde{u}_{h,2}|_K=u_1\lambda_1 + u_2\lambda_2 + u_3\lambda_3 + 4(e_1\lambda_1\lambda_2 +
  e_2\lambda_1\lambda_3 + e_3\lambda_2\lambda_3),
\end{equation*}
where $e_i$ denotes the mid-edge values. Taking the Zhang-Naga (or any other
sufficiently accurate) recovered gradient $\Pi(u_{h,1})=(\Pi(u_{h,1})_1,\Pi(u_{h,1})_2)$, the
mid-edge values are found by enforcing consistency between the Hessian of
$\tilde{u}_{h,2}$ and the derivatives of $\Pi(u_h)$:
\begin{align*}
  &\frac{\partial^2\tilde{u}_{h,2}}{\partial x_{1}^2} 
  = \frac{\partial\Pi(u_{h,1})_1}{\partial x_1},\quad\quad
  \frac{\partial^2\tilde{u}_{h,2}}{\partial x_{2}^2}
  = \frac{\partial\Pi(u_{h,1})_2}{\partial x_2},\\
  &\frac{\partial^2\tilde{u}_{h,2}}{\partial x_{1}\partial x_{2}} 
  = \frac{1}{2}\left(\frac{\partial\Pi(u_{h,1})_1}{\partial x_2} + \frac{\partial\Pi(u_{h,1})_2}{\partial x_1}\right).
\end{align*}
Having computed the higher-order solution, the adaptation process follows
similarly to that used in Section \ref{subsec:elem}. But since
(\ref{eq-err-hier}) gives a direct representation of the error field, one has
considerably more freedom in how to calculate the error on each element. The
choice in \cite{boiforfor12}, and as implemented by the authors in MEF++, is to
target the global error in the $L^2$ norm. The operations of edge refinement
and node removal will be used to achieve a global level of error, while node
displacement and edge swapping are used to locally equidistribute the error by
minimizing the gradient of the error, i.e. the $H^1$-seminorm error.

In this paper, we only consider hierarchical adaptation for $P_1$ finite
elements for the sake of comparison. However, note that (\ref{eq-err-hier}) is
quite general, and it is very easy to generalize these ideas to higher-order
finite elements. The hierarchical method has been successfully applied to $P_2$
finite elements in \cite{boithes}.


\subsection{Metric adaptation}
\label{subsec:met}

Currently, the most popular anisotropic mesh adaptation methods in use are
metric based. Here, the main idea is to control the edge length in a Riemannian
metric. For a planar domain $\Omega$, an inner product is given by a set
$\{\mathcal{M}(x)|x\in\Omega\}$ of $2\times 2$ positive definite matrices. In
practice we only have a discrete approximation, consisting of a metric defined
at the nodes of the mesh, the values at other points being obtained by
interpolation \cite{borgeoheclausal97}. For an edge $e=PQ$, the edge
length is given by
\begin{equation}\label{eq-met}
  |e|_\mathcal{M}=\int_0^1\sqrt{e^T\mathcal{M}(P+te)e}\,\,\mathrm{d}t.
\end{equation}
The goal of a metric based adaptation algorithm will be to generate meshes which
are ``unit'' with respect to the metric. For 2D meshes this simply means that,
up to some tolerance, the edges have unit length.

The metric adaptation will be done using MEF++, applying the same mesh
modification operations discussed Section \ref{subsec:elem}. The goal of edge
refinement and node removal is to achieve unit edge length, while the second two
locally equidistribute the error. More precisely, edge swapping applies a
non-Euclidean variant of the classical Delaunay edge swapping criterion to
maximize the minimum angle. For node movement, the edges attached to a node are
seen as a network of springs with stiffness proportional to metric edge length,
and the goal is to minimize the ``energy'' of the system. For full details see
for instance \cite{habdombouaitforval00}.

%


\subsection{Residual metric based}
\label{subsec:res-met}

Now we describe how the residual estimator introduced in Section \ref{sec:est}
can be used to define a metric. There exist at least two approaches used in the
literature, both following similar principles. The one we will use is that from
\cite{micper06} since it resulted in unit meshes in only a few iterations. The
metric is constructed locally for the element $K$ by finding the shape of a new
element $K_{new}$ which minimizes $\eta_{K_{new}}$ up to a fixed area. From
\cite{micper06}, Proposition 26, the minimizing shape is given by
$\tilde{r}_{1,K}=p_2,\,\tilde{r}_{2,K}=p_1,$ and
$\tilde{s}_K=\sqrt{\frac{\alpha_{1,K}}{\alpha_{2,K}}},$ where
$\alpha_{1,K}\geq\alpha_{2,K}>0$ and $p_1,\,p_2$ are respectively the
eigenvalues and eigenvectors of the normalized matrix $\frac{G_K}{|K|}.$ Under
these conditions, one obtains a simple relation for the error
\cite[p. 826]{micper06}. Imposing $\eta_K\equiv\tau$, where $\tau>$ is the local
error tolerance, one determines the area from this relation, and then easily
recovers the optimal values $\tilde{\lambda}_{1,K}$,
$\tilde{\lambda}_{2,K}$. Finally, defining $\tilde{R}_K,\,\tilde{\Lambda}_K$ in
the obvious way, the metric on $K$ is then given by
$\tilde{\mathcal{M}}_K=\tilde{R}_K^T\tilde{\Lambda}_K^{-2}\tilde{R}_K.$ We
remark that the mesh adaptation software used for this paper requires the metric
to be defined on vertices, and for this purpose we apply metric
intersection. For a vertex $p$, the metric $\tilde{\mathcal{M}}_p$ will be
defined as the intersection of all the metrics $\tilde{\mathcal{M}}_K$ over the
patch $\Delta_p.$ For details on metric intersection, see for instance
\cite{fregeo08}. Note that, alternatively to metric intersection, we found that
a simple averaging procedure gave satisfactory results.


\subsection{Hessian}
\label{subsec:hess}

The Hessian metric approach introduced here follows the expositions from
\cite{borgeoheclausal97}, \cite{habdombouaitforval00}. The $P_1$ interpolation
error $e_h^I=u-I_h(u)$ of a function $u$ on an edge $\ell=[x_i,x_j]$ satisfies
\begin{equation*}
  |e_h^I|_{L^\infty(\ell)}=\frac{|\ell|^2}{8}\left|\frac{\mathrm{d}^2 u}{\mathrm{d} x^2 }(\xi)\right|,
\end{equation*}
where $\xi$ is some point in $\ell.$ The error on the edge can then approximated
using the end-points of the interval
\begin{equation}\label{eq-met-2D}
  |e_h^I|_{L^\infty(\ell)}\approx\frac{1}{2}\left(\ell^T |H(x_i)|\ell
    +\ell^T |H(x_j)|\ell\right),
\end{equation}
where $H(x)$ is the Hessian of $u$ at $x,$ and $|H(x)|$ is the positive
semi-definite matrix obtained by taking the absolute value of the eigenvalues of
the symmetric matrix $H(x)$. Fixing an error level $e_D,$ defining the metric
$\mathcal{M}_{x}=\frac{1}{8\cdot e_D}|H(x)|$, and computing the edge length from
(\ref{eq-met}) with the trapezoid rule give $|\ell|_\mathcal{M}=1$ precisely
when the approximate error (\ref{eq-met-2D}) is equal to $e_D.$ For a slightly
different approach to Hessian methods, see \cite{losala11a} and \cite{losala11b}.

Note that (\ref{eq-met-2D}) depends on the unknown Hessian of the solution. One
may obtain a piecewise linear approximation of the Hessian from the computed
solution, for instance, with the least square fitting method used in
\cite{belforcha04}. 


\subsection{$L^2$ error vs. $H^1$ seminorm error estimation}
\label{subsec:H1-vs-L2}




While the residual estimator targets the energy norm of the error, it is also
interesting to determine whether we can expect to control the $L^2$ norm of the
error. Recall that for elliptic problems such as (\ref{eq-lap}), if a set of
meshes uniformly satisfies the minimum (or maximum) angle condition for some
angle $\theta>0,$ then there exists $C_1>0$ such that for any such mesh with
maximum edge length $h>0$,
\begin{equation}\label{order1}
  |e_h|_{1,\Omega}\leq C_1h.
\end{equation}
Furthermore, the Aubin-Nitsche Lemma states that there is a $C_2>0$ such that
the $L^2$ error satisfies
\begin{equation}\label{aubin}
  \|e_h\|_{0,\Omega}\leq C_2h|e_h|_{1,\Omega}.
\end{equation}
Combining (\ref{order1}) with (\ref{aubin}) one concludes that there is a
$C_3>0$ such that
\begin{equation}\label{order2}
  \|e_h\|_{0,\Omega}\leq C_3h^2.
\end{equation}
Therefore, if we adapt the mesh to control the $H^1$ seminorm error, which is
the case for the residual estimator, we expect higher-order convergence for the
$L^2$ error coming from an upper bound similar to (\ref{order2}).

In the absence of an Aubin-Nitsche Lemma in the context of anisotropic meshes,
we attempt to find a comparison at the element level between the $L^2$ and
energy norm of the error. The ultimate goal of the analysis is to derive an
$L^2$ norm variant of the residual estimator $\eta_K$, which could be
used for mesh adaptation.

To motivate our results, we first recall two existing a posteriori $L^2$ error
estimators. The first, in the isotropic setting is of the form
$\eta_{L^2(K)}=h_K\eta_{H^1(K)},$ where $\eta_{H^1(K)}$ is an estimator for the
energy norm, see (3.20) and (3.31) from \cite{ainode97} and 5.1 from
\cite{carver99}). Similarly, the authors in \cite{kunver00} derived an
anisotropic estimate of the form $\tilde{\eta}_E=(h_{\min,E})\eta_E$, where
$\eta_E,\,\tilde{\eta}_E$ are respectively estimates for the $H^1$ seminorm and
$L^2$ norm of the error for the edge $E$, and where $h_{\min,E}$ plays an
analogous role to $\lambda_{2,K}$ in the current setting. These observations
lead us to propose the following candidate for an $L^2$ error estimator:
\begin{equation}\label{eqn:eta-scaled}
  \tilde{\eta}_K=\lambda_{2,K}\eta_K,
\end{equation}
where $\eta_K$ is the energy norm estimator (\ref{eq-est-aniso}). In what
follows we present some partial results towards the reliability and efficiency
of this estimator. 

We make the following strong assumption on the equivalence of the energy norm
error: there exist $C_1,\,C_2>0$ such that for every element $K$,
\begin{align}\label{eta-equiv-elem}
  C_1\eta_K\leq|e_h|_{1,K}\leq C_2\eta_K.
\end{align}
Given this assumption, the strategy will be to relate the $L^2$ error and energy
norm locally. Note that in the literature, the upper bound in
(\ref{eta-equiv-elem}) only appears globally (for the entire domain $\Omega$),
while the lower bound holds on a patch related to a quasi-interpolation
operator, see \cite[Propositions 16, 21]{micper06}. Numerical results
in Section \ref{subsubsec:res-elem-adapt} suggest that provided the mesh is not
too coarse, the inequality holds with $C_1=1$ and $C_2=10$, see
Figure \ref{func3-errdist}.







We begin with a technical lemma. In what follows we let $W_h$ denote the
space of continuous piecewise quadratic functions on $\mathcal{T}_h$. We note,
however, that $W_h$ could be replaced by a higher-order finite element space,
with different constants for the inequalities.

\begin{lem}\label{lem-l2-lower} Let $v_h\in V_h$.
  \begin{enumerate}
    \item There exists $C_{\hat{K}}>0$ depending only on the reference element
      $\hat{K}$ such that for all $w_h\in W_h$ and $K\in\mathcal{T}_h$,
  \begin{align}\label{eq-l2-lower} 
    \|v_h-w_h\|_{0,K}\geq
    &\,C_{\hat{K}}\lambda_{2,K}|v_h-w_h|_{1,K}.
  \end{align}
  \item Suppose that $w_h\in W_h$ and $C_{\hat{K},1}>0$ such that
    for all $K\in\mathcal{T}_h$
  \begin{align}\label{eqn:direct-equi}
    \lambda_{1,K}\|\nabla (v_h - w_h)\cdot r_{1,K}\|_{0,K}
    &\leq C_{\hat{K},1}\lambda_{2,K}\|\nabla (v_h-w_h)\cdot r_{2,K}\|_{0,K}.
  \end{align}
  Then there exists $C_{\hat{K}}>0$ depending only on the reference element
  $\hat{K}$ and $C_{\hat{K},1}$ such that for all $K\in\mathcal{T}_h$,
  \begin{align}\label{eq-l2-upper}
    \|v_h-w_h\|_{0,K}
    &\leq C_{\hat{K},2}\lambda_{2,K}|v_h-w_h|_{1,K}.
  \end{align}
  \end{enumerate}
\end{lem}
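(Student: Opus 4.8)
The plan is to reduce both inequalities to the fixed reference element $\hat{K}$ through the affine map $F_K$, where the difference $\phi:=v_h-w_h$ pulls back to $\hat{\phi}:=\phi\circ F_K$. Since $v_h|_K$ is affine and $w_h|_K$ is quadratic, $\hat{\phi}$ always lies in the single finite-dimensional space $P_2(\hat{K})$ of polynomials of degree at most two, so every constant produced on $\hat{K}$ depends only on $\hat{K}$ (and would change, as remarked in the text, for a higher-order $W_h$). The whole computation rests on one identity coming from the SVD $J_K=\mathcal{R}_K^T\Lambda_K\mathcal{R}_K\mathcal{Z}_K$: writing $\partial_i\phi=\nabla\phi\cdot r_{i,K}$ and using $\mathcal{R}_K r_{i,K}=\hat{e}_i$ together with the orthogonality of $\mathcal{R}_K$ and $\mathcal{Z}_K$, one gets $|J_K^T\nabla\phi|^2=\lambda_{1,K}^2(\partial_1\phi)^2+\lambda_{2,K}^2(\partial_2\phi)^2$. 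Changing variables and using $|\det J_K|=\lambda_{1,K}\lambda_{2,K}$ then yields the master identities
\[
\|\hat{\phi}\|_{0,\hat{K}}^2=\frac{1}{\lambda_{1,K}\lambda_{2,K}}\|\phi\|_{0,K}^2,\qquad |\hat{\phi}|_{1,\hat{K}}^2=\frac{1}{\lambda_{1,K}\lambda_{2,K}}\Big(\lambda_{1,K}^2\|\partial_1\phi\|_{0,K}^2+\lambda_{2,K}^2\|\partial_2\phi\|_{0,K}^2\Big).
\]

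For part 1 I would only need the elementary inverse inequality $|\hat{\phi}|_{1,\hat{K}}\le C_{\hat{K}}\|\hat{\phi}\|_{0,\hat{K}}$, which holds on the finite-dimensional space $P_2(\hat{K})$ because the ratio is continuous on the compact $L^2$-unit sphere. Since $\lambda_{2,K}\le\lambda_{1,K}$, the second master identity gives $\lambda_{2,K}^2|\phi|_{1,K}^2=\lambda_{2,K}^2(\|\partial_1\phi\|_{0,K}^2+\|\partial_2\phi\|_{0,K}^2)\le\lambda_{1,K}\lambda_{2,K}|\hat{\phi}|_{1,\hat{K}}^2$; feeding in the inverse inequality and the first master identity turns this into $\lambda_{2,K}^2|\phi|_{1,K}^2\le C_{\hat{K}}^2\|\phi\|_{0,K}^2$, which is exactly (\ref{eq-l2-lower}). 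Note that part 1 uses neither the hypothesis (\ref{eqn:direct-equi}) nor any zero-order control, since it merely bounds a seminorm by an $L^2$ norm.

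For part 2 I would instead invoke the Poincar\'e--Wirtinger inequality $\|\hat{\phi}\|_{0,\hat{K}}\le C_{\hat{K}}|\hat{\phi}|_{1,\hat{K}}$ on $P_2(\hat{K})$ and push it back: $\|\phi\|_{0,K}^2=\lambda_{1,K}\lambda_{2,K}\|\hat{\phi}\|_{0,\hat{K}}^2\le C_{\hat{K}}^2(\lambda_{1,K}^2\|\partial_1\phi\|_{0,K}^2+\lambda_{2,K}^2\|\partial_2\phi\|_{0,K}^2)$. Here the natural scaling is $\lambda_{1,K}$, and the sole purpose of (\ref{eqn:direct-equi}) is to downgrade it: it bounds the long-direction term $\lambda_{1,K}^2\|\partial_1\phi\|_{0,K}^2$ by $C_{\hat{K},1}^2\lambda_{2,K}^2\|\partial_2\phi\|_{0,K}^2$, so that $\|\phi\|_{0,K}^2\le C_{\hat{K}}^2(1+C_{\hat{K},1}^2)\lambda_{2,K}^2\|\partial_2\phi\|_{0,K}^2\le C_{\hat{K}}^2(1+C_{\hat{K},1}^2)\lambda_{2,K}^2|\phi|_{1,K}^2$, giving (\ref{eq-l2-upper}) with $C_{\hat{K},2}=C_{\hat{K}}(1+C_{\hat{K},1}^2)^{1/2}$.

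The hard part will be the zero-order (constant) mode in part 2: Poincar\'e--Wirtinger only controls $\hat{\phi}$ modulo its mean, and a nonzero constant makes the left side of (\ref{eq-l2-upper}) positive while the right side vanishes, so the bound cannot hold for a completely arbitrary $w_h$ without extra structure. The remedy I would adopt is to exploit that the difference of interest vanishes at the vertices of each $K$ --- which is precisely the situation for the hierarchical error $u_{h,1}-\tilde{u}_{h,2}$ in (\ref{eq-err-hier}), whose edge-bubble correction is zero at the vertices --- or, more weakly, has vanishing element mean; on this subspace the Poincar\'e--Wirtinger step is legitimate and the estimate closes. I would therefore state part 2 for such $\phi$ and make explicit that controlling this constant mode, equivalently justifying a genuine anisotropic Poincar\'e inequality scaled by $\lambda_{2,K}$ rather than $\lambda_{1,K}$, is where the real content and the strength of the hypotheses reside.
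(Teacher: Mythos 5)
Your argument is essentially the paper's: both proofs pull $\phi=v_h-w_h$ back to the reference element through $F_K$, exploit the finite-dimensionality of $P_2(\hat K)$ to trade $|\cdot|_{1,\hat K}$ against $\|\cdot\|_{0,\hat K}$, and use the anisotropic identity $|\hat\phi|_{1,\hat K}^2=(\lambda_{1,K}\lambda_{2,K})^{-1}\bigl(\lambda_{1,K}^2\|\nabla\phi\cdot r_{1,K}\|_{0,K}^2+\lambda_{2,K}^2\|\nabla\phi\cdot r_{2,K}\|_{0,K}^2\bigr)$ (the paper invokes Lemma 2.2 and equation (17) of Formaggia--Perotto for the two directions of this) together with $\lambda_{2,K}\le\lambda_{1,K}$ for part 1 and with hypothesis (\ref{eqn:direct-equi}) for part 2. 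The one place you diverge is the ``hard part'' you flag at the end, and you are right to flag it: the paper asserts a two-sided equivalence $\tilde C_1\|\hat w\|_{0,\hat K}\le|\hat w|_{1,\hat K}\le\tilde C_2\|\hat w\|_{0,\hat K}$ on all of $P_2(\hat K)$, but the left inequality fails for constants, and correspondingly part 2 as stated is vacuously violated: taking $w_h=v_h+c$ with $c\neq 0$ constant makes both sides of (\ref{eqn:direct-equi}) zero, so the hypothesis holds, while the left side of (\ref{eq-l2-upper}) equals $|c|\,|K|^{1/2}>0$ and the right side vanishes. Your remedy --- restricting to differences that vanish at the vertices (or have zero element mean), so that a genuine Poincar\'e inequality is available on the corresponding subspace of $P_2(\hat K)$ --- is the correct fix, and it is consistent with the intended application, where $w_h=\tilde u_{h,2}$ is the hierarchical reconstruction agreeing with $u_h$ at the vertices. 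In short: same route as the paper for both parts, with the added value that you have isolated, and repaired, the one step the paper's proof glosses over.
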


\begin{proof} 
  By \cite{forper01}, Lemma 2.2 we have
  \begin{align*} 
    |w_h-v_h|_{1,K} 
    &\leq\left(\frac{\lambda_{1,K}}{\lambda_{2,K}}\right)^{1/2}|\hat{w}_h-\hat{v}_h|_{1,\hat{K}},
  \end{align*} 
  where $\hat{w}=w\circ F_K$ for a function $w$ on $K.$ Since $P_2(\hat{K})$ is finite
  dimensional, there exist positive constants $\tilde{C}_1,\,\tilde{C}_2$ such that
  \begin{align*} 
    \tilde{C}_1\|\hat{w}\|_{0,\hat{K}}\leq|\hat{w}|_{1,\hat{K}}\leq\tilde{C}_2\|\hat{w}\|_{0,\hat{K}},\quad\quad\forall \hat{w}\in
    P_2(\hat{K}).
  \end{align*} 
  Therefore,
  \begin{align} 
    |w_h-v_h|_{1,K} 
    &\leq \tilde{C}_2\left(\frac{\lambda_{1,K}}{\lambda_{2,K}}\right)^{1/2}\|\hat{w}_h-\hat{v}_h\|_{0,\hat{K}}\nonumber\\
    &=\tilde{C}_2\left(\frac{1}{\lambda_{1,K}\lambda_{2,K}}\right)^{1/2}
    \left(\frac{\lambda_{1,K}}{\lambda_{2,K}}\right)^{1/2}
    \|w_h-v_h\|_{0,K}\nonumber\\
    &=\frac{\tilde{C}_2}{\lambda_{2,K}}\|w_h-v_h\|_{0,K},\label{low-finite-equiv}
  \end{align} 
  and (\ref{eq-l2-lower}) follows. For (\ref{eq-l2-upper}), we have
  \begin{align*}
    \|v_h-w_h\|_{0,K}
    &=\left(\lambda_{1,K}\lambda_{2,K}\right)^{1/2}\|\hat{u}_h-\hat{u}_{h,2}\|_{0,\hat{K}}\\
    &\leq \tilde{C}_1\left(\lambda_{1,K}\lambda_{2,K}\right)^{1/2}|\hat{u}_h-\hat{u}_{h,2}|_{1,\hat{K}}.
  \end{align*}
  Applying \cite{forper01}, equation (17) to the right side of the inequality,
  and applying assumption (\ref{eqn:direct-equi}),
  \begin{align*}
    &\|v_h-w_h\|_{0,K}\\
    &\leq \tilde{C}_1
    \left(\lambda_{1,K}^2\|\nabla(v_h-w_h)\cdot r_{1,K}\|_{0,K}^2
      + \lambda_{2,K}^2\|\nabla(v_h-w_h)\cdot r_{2,K}\|_{0,K}^2\right)^{1/2}\\
    &\leq \tilde{C}_1C_{\hat{K},1}\lambda_{2,K}|v_h-w_h|_{1,K}.
  \end{align*}

\end{proof}

In the present situation we take $v_h=u_h.$ To apply Lemma \ref{lem-l2-lower} in
a meaningful way, we would like to find functions $\{w_h\in W_h\}_h$ that
converge to $u$ faster than $\{u_h\in V_h\}_h$ and that moreover satisfy
(\ref{eqn:direct-equi}) uniformly. Let $g_h=\Pi_h(u_h)$ denote the recovered
gradient, which is assumed to be superconvergent. In the literature, the
adaptive algorithm is designed to achieve the equality
\begin{equation*}
  \lambda_{1,K}\|(\nabla u_h-g_h) \cdot r_{1,K}\|_{0,K}=\lambda_{2,K}\|(\nabla
  u_h-g_h)\cdot r_{2,K}\|_{0,K},
\end{equation*}
In the context of \cite{micper06}, this equality means that $\eta_K$ has been
minimized with respect to the choice of $r_{1,K},\,r_{2,K}$ and aspect ratio
$s_K$. The adaptive algorithm discussed in Section \ref{subsubsec:first-err-l2}
will ensure that the equality holds by minimizing of $\eta_K$ with edge swapping
and node movement. In general, $g_h$ is not the gradient of a function in
$W_h$. Instead, we take $\tilde{u}_{h,2}\in W_h$ to be the hierarchical
reconstruction introduced in \cite{boiforfor12}, which in practice provides a
higher-order approximation to $u$, for instance \cite[Figure
17]{boiforfor12}. Additionally, it will be assumed that $\nabla \tilde{u}_{h,2}$
and $g_h$ are close enough so that (\ref{eqn:direct-equi}) holds with
$w_h=\tilde{u}_{h,2}.$

\begin{prop}\label{prop-l2}
  With the notation and assumptions of the preceding paragraph, there exist
  positive constants $C_{\hat{K},1},\,C_{\hat{K},2}$ such that for all
  $K\in\mathcal{T}_h$,
  \begin{align}\label{ineq-h1-l2-low}
    \|e_h\|_{0,K}
    &\geq C_{\hat{K},1}\left(\lambda_{2,K}|e_h|_{1,K}
    - \|u-\tilde{u}_{h,2}\|_{0,K} - \lambda_{2,K}|u-\tilde{u}_{h,2}|_{1,K}\right),
  \end{align}
  and
  \begin{align}\label{ineq-h1-l2-up}
    \|e_h\|_{0,K}
    &\leq C_{\hat{K},2}\left(\lambda_{2,K}|e_h|_{1,K} 
    + \|u-\tilde{u}_{h,2}\|_{0,K}
    + \lambda_{2,K}|u-\tilde{u}_{h,2}|_{1,K}\right)
  \end{align}

\end{prop}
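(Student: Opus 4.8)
The plan is to deduce the proposition from Lemma~\ref{lem-l2-lower} applied with $v_h = u_h$ and $w_h = \tilde{u}_{h,2}$, using the triangle inequality to transfer between the computable difference $u_h - \tilde{u}_{h,2}$ (a difference of a $V_h$ function and a $W_h$ function, to which the lemma applies directly) and the true error $e_h = u - u_h$. The key algebraic identity is
\[
  u_h - \tilde{u}_{h,2} = (u - \tilde{u}_{h,2}) - e_h,
\]
so that $\|u_h - \tilde{u}_{h,2}\|_{0,K}$ and $|u_h - \tilde{u}_{h,2}|_{1,K}$ differ from $\|e_h\|_{0,K}$ and $|e_h|_{1,K}$ only by the reconstruction-error terms $\|u - \tilde{u}_{h,2}\|_{0,K}$ and $|u - \tilde{u}_{h,2}|_{1,K}$. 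Since $\tilde{u}_{h,2}$ is a higher-order approximation of $u$, these terms are expected to be lower order, which is precisely why they appear as small corrections in (\ref{ineq-h1-l2-low}) and (\ref{ineq-h1-l2-up}).

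For the upper bound (\ref{ineq-h1-l2-up}) I would first use the triangle inequality in $L^2$,
\[
  \|e_h\|_{0,K} \leq \|u - \tilde{u}_{h,2}\|_{0,K} + \|u_h - \tilde{u}_{h,2}\|_{0,K},
\]
then bound the last term by (\ref{eq-l2-upper}) from part~2 of Lemma~\ref{lem-l2-lower} (this is where the standing assumption (\ref{eqn:direct-equi}) with $w_h = \tilde{u}_{h,2}$ enters), and finally apply the triangle inequality to the seminorm, $|u_h - \tilde{u}_{h,2}|_{1,K} \leq |e_h|_{1,K} + |u - \tilde{u}_{h,2}|_{1,K}$. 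This yields $\|e_h\|_{0,K} \leq C_{\hat{K},2}\lambda_{2,K}|e_h|_{1,K} + C_{\hat{K},2}\lambda_{2,K}|u - \tilde{u}_{h,2}|_{1,K} + \|u - \tilde{u}_{h,2}\|_{0,K}$, and absorbing the two distinct coefficients into $\max(C_{\hat{K},2},1)$ gives (\ref{ineq-h1-l2-up}). This direction is routine precisely because every term is nonnegative, so the coefficients may be collected by taking a maximum.

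For the lower bound (\ref{ineq-h1-l2-low}) I would run the same chain with the inequalities reversed: the reverse triangle inequality gives $\|e_h\|_{0,K} \geq \|u_h - \tilde{u}_{h,2}\|_{0,K} - \|u - \tilde{u}_{h,2}\|_{0,K}$, part~1 of the lemma (inequality (\ref{eq-l2-lower})) bounds $\|u_h - \tilde{u}_{h,2}\|_{0,K}$ below by $C_{\hat{K}}\lambda_{2,K}|u_h - \tilde{u}_{h,2}|_{1,K}$, and the reverse triangle inequality on the seminorm gives $|u_h - \tilde{u}_{h,2}|_{1,K} \geq |e_h|_{1,K} - |u - \tilde{u}_{h,2}|_{1,K}$. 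Chaining these produces
\[
  \|e_h\|_{0,K} \geq C_{\hat{K}}\lambda_{2,K}|e_h|_{1,K} - C_{\hat{K}}\lambda_{2,K}|u - \tilde{u}_{h,2}|_{1,K} - \|u - \tilde{u}_{h,2}\|_{0,K},
\]
which already has the shape of (\ref{ineq-h1-l2-low}).

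The main obstacle is the final collection of this lower bound under a single factored constant $C_{\hat{K},1}$. Unlike the upper bound, the reconstruction $L^2$ term is \emph{subtracted} with coefficient $1$ while the seminorm correction carries $C_{\hat{K}}$, and a subtracted term cannot be dominated by simply taking a maximum of the coefficients; writing a subtracted coefficient $1$ under a single overall factor $C_{\hat{K}} < 1$ is not possible uniformly over arbitrary configurations of the three quantities. I would resolve this by setting $C_{\hat{K},1} = \min(C_{\hat{K}},1)$ and reading (\ref{ineq-h1-l2-low}) in the regime that it is meant for, namely where $\lambda_{2,K}|e_h|_{1,K}$ genuinely dominates the two reconstruction terms by superconvergence (so the right-hand side is not merely positive but the corrections are truly lower order), together with the trivial bound $\|e_h\|_{0,K} \geq 0$ when the right-hand side is negative. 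Carefully tracking the constant in that dominant regime, rather than claiming a pointwise inequality for all values of the three quantities, is the delicate step, and it is consistent with the paper's framing of these estimates as partial reliability/efficiency results.
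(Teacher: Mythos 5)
Your proposal is correct and follows essentially the same route as the paper, whose proof of Proposition \ref{prop-l2} is literally ``follows directly from Lemma \ref{lem-l2-lower} and straightforward triangle inequality arguments'' --- i.e., apply the lemma with $v_h=u_h$, $w_h=\tilde{u}_{h,2}$ and pass between $u_h-\tilde{u}_{h,2}$ and $e_h$ by the triangle inequality, exactly as you do. The subtlety you flag about collecting the lower bound under a single factored constant $C_{\hat{K},1}$ (the subtracted $L^2$ reconstruction term carries coefficient $1$ rather than $C_{\hat{K}}$) is a genuine imprecision that the paper silently glosses over, and your reading of \eqref{ineq-h1-l2-low} in the regime where the main term dominates the higher-order corrections is the intended interpretation.
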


\begin{proof}
  This follows directly from Lemma \ref{lem-l2-lower} and straightforward
  triangle inequality arguments.
\end{proof}


Finally, if we apply the superconvergence assumptions on $\tilde{u}_{h,2}$ and $\nabla
\tilde{u}_{h,2}$, and the strong energy norm error assumption (\ref{eta-equiv-elem}), we
conjecture that there exists $C_{\hat{K},1},\,C_{\hat{K},2}>0$ such that for
every element $K,$ up to the addition of higher-order terms, the $L^2$ norm
error satisfies
\begin{align}\label{eta-scale-equiv}
  C_{\hat{K},1}\tilde{\eta}_K
  \leq \|e_h\|_{0,K}
  \leq C_{\hat{K},2}\tilde{\eta}_K.
\end{align}
This local estimate will be verified numerically in the following section.

%



\section{Numerical results}
\label{sec:numer}

In this section we provide numerical validation for the new, element-based
adaptation method for the residual estimator. The full adaptation loop is given
in Algorithm \ref{loop}. The first test case will begin with an illustration of
the convergence of the loop for the element-based residual method introduced in
this paper, the notion of convergence to be made more precise in the following
section. Next, we will assess how well the method performs in achieving the
goal of equidistributing the error over the elements of the mesh. Since what we
are really interested in is controlling the actual error, the analysis will
include an element-level comparison of the estimated versus exact error.
Following will be a numerical validation of the $L^2$ error control results from
Section \ref{subsec:H1-vs-L2}. Finally, the remainder of the section will be
devoted to the comparison of the adaptation methods outlined in Section
\ref{sec:meth}.

\begin{algorithm}[h]
  \caption{Solution-adaptation loop}
  \label{loop}
  \begin{enumerate}
    \item[\namedlabel{itm:solve}{1}] Compute the solution and error estimator on
      the current mesh.
    \item[\namedlabel{itm:adap-rep}{2}] Adapt the current mesh by performing the
      following loop one or more times:
      \begin{enumerate}
        \item[\namedlabel{itm:ref}{(a)}] Refine edges where the error is too
          large.
        \item[\namedlabel{itm:smooth}{(b)}] Minimize the error by swapping edges
          until the algorithm terminates, then equidistribute the error by
          applying node displacement. Repeat the procedure one or more times.
        \item[\namedlabel{itm:deref}{(c)}] Remove nodes where the error is too
          small, or when the impact on the error is minimal.
        \item[\namedlabel{itm:sm-rep}{(d)}] Apply
          \ref{itm:adap-rep}\ref{itm:smooth}.
      \end{enumerate}
  \end{enumerate}
\end{algorithm}


\subsection{First test case}
\label{subsec:first}


We consider the problem (\ref{eq-lap}) using $A=I,$ with domain
$\Omega=(0,1)\times (0,1),$ and $f,\,g$ chosen so that the exact solution is
\begin{equation*}
  u_1(x,y)=4(1-e^{-100x}-x(1-e^{-100}))y(1-y).
\end{equation*}
Due to the boundary layer near $x=0$, this function can be used to check the
anisotropy of an error estimator and adaptation method, and appears for instance
in \cite{forper01} and \cite{pic03a}.


\subsubsection{Assessment of the residual element-based method}
\label{subsubsec:res-elem-adapt}


\paragraph{Convergence of the adaptation loop.}
\label{para:first-conv}

\begin{figure}[h]
  \centering
  \subfloat{
    \label{func3-mesh-init}
    \includegraphics[width=0.3\textwidth]{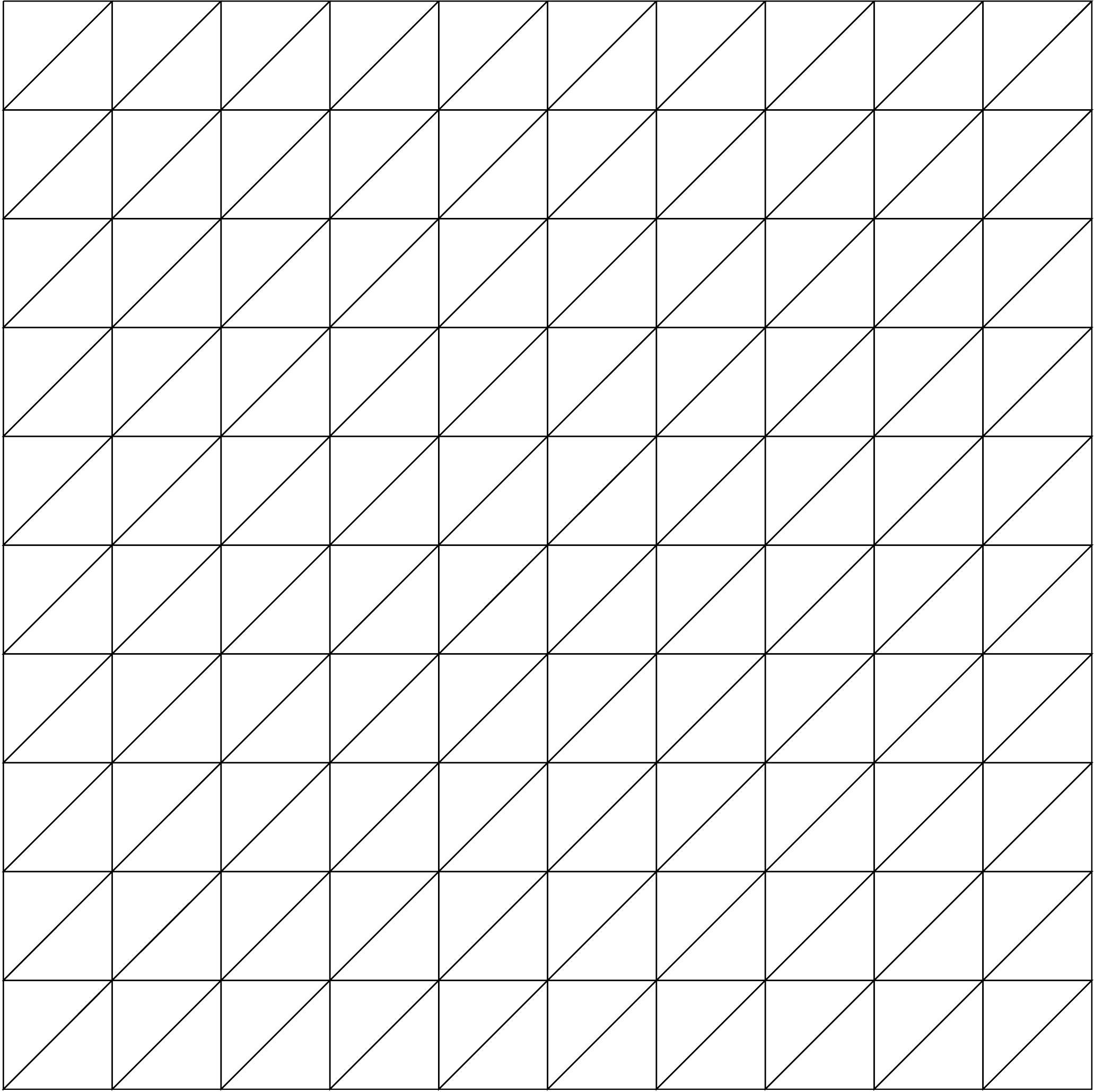}
  }
  \subfloat{
    \label{func3-mesh-first}
    \includegraphics[width=0.3\textwidth]{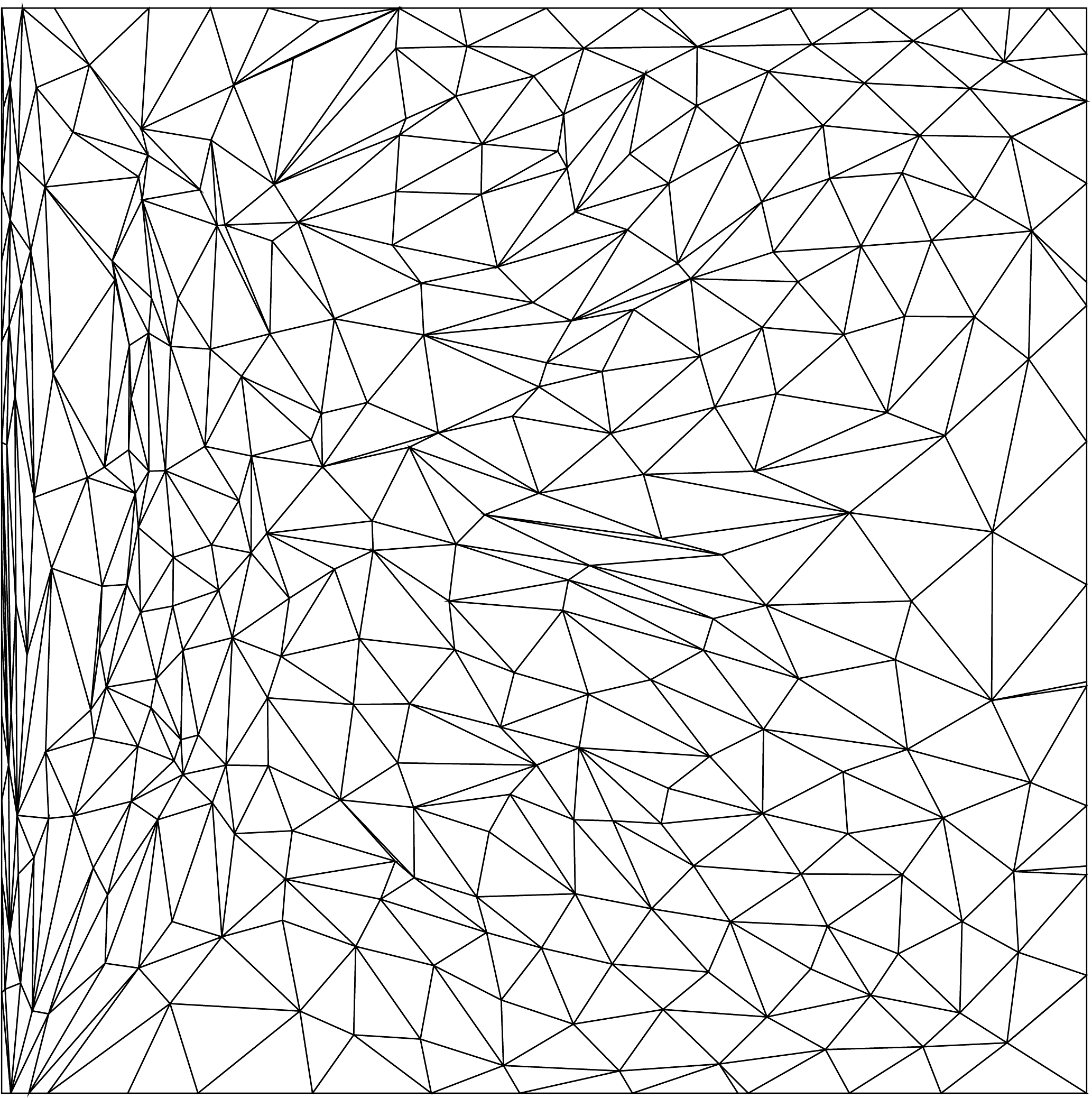}
  }
  \subfloat{
    \label{func3-mesh-tenth}
    \includegraphics[width=0.3\textwidth]{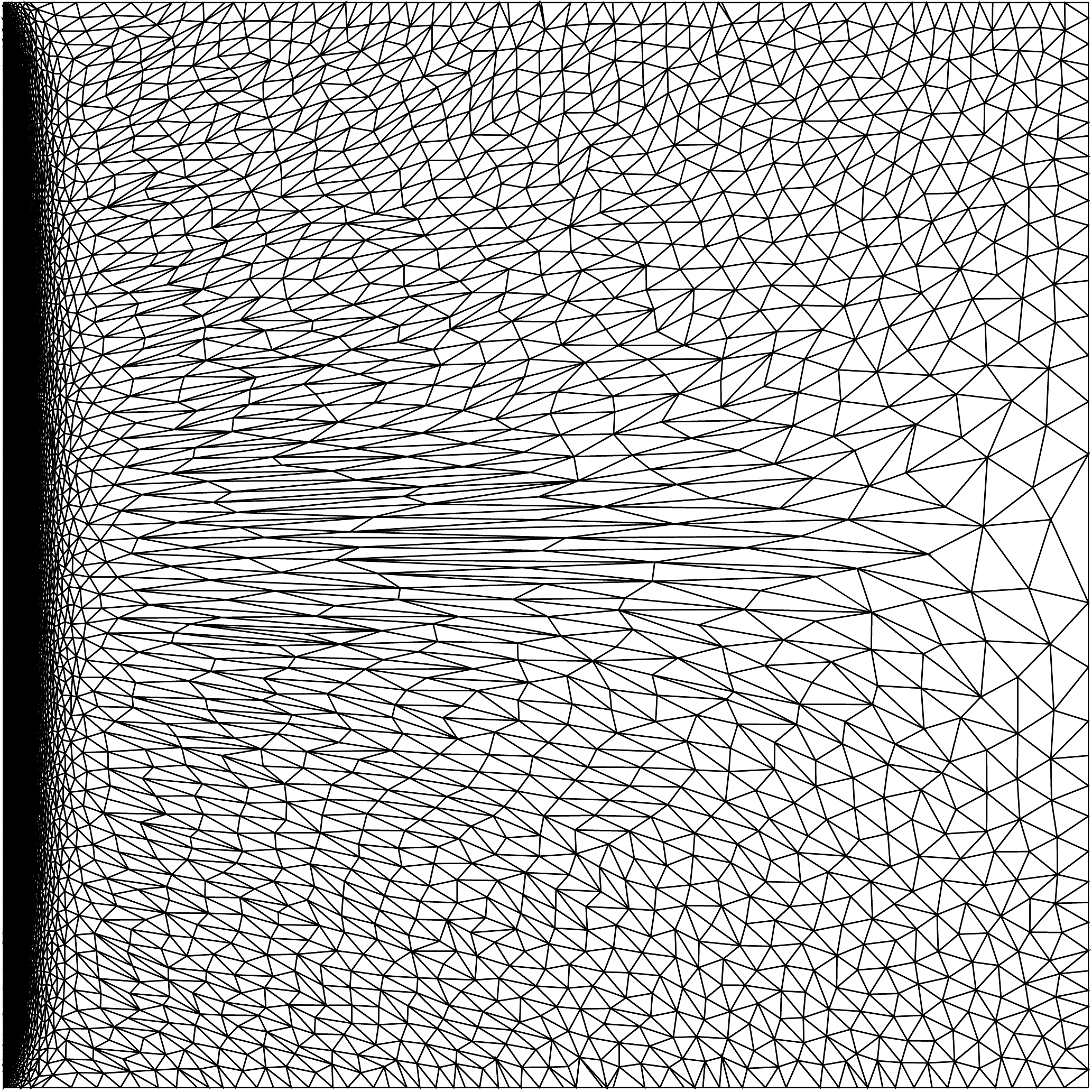}
  }
  \caption{Left: initial uniform mesh with 121 vertices. Middle: first adapted mesh with 324
    vertices. Right: final mesh with 8559 vertices.}
  \label{func3-meshes-first}
\end{figure}

The convergence of the algorithm for the element-based residual method is
assessed. Starting from a relatively coarse initial mesh, the tolerance $TOL$
is set to $0.125$ and the loop is run for $40$ iterations (which for our
purposes will be more than sufficient). See Figure \ref{func3-meshes-first} for
initial and adapted meshes. In the context of Algorithm \ref{loop}, the edge
swapping/node movement step is always run $3$ times. Additionally, the
adaptation step \ref{itm:adap-rep} is only run once before the solution and the
estimator are recomputed. The point of view taken here is that the adaptation
algorithm should not be run too long before recomputing the solution and error
estimator. For comparison, we computed an example where step
\ref{itm:adap-rep} from Algorithm \ref{loop} is run twice, as opposed to just
once. From Figure \ref{fig:loop-rep}, repeating the loop initially calls for too
much refinement. This is likely due to a loss of accuracy of the estimator on
coarse meshes (see Figure \ref{func3-errdist} and the related discussion).

\begin{figure}[ht]
  \centering 
  \subfloat[Solid line: applying the adaptation step from Algorithm
  \ref{loop} twice before recalculating the solution. Dotted line: applying the
  adaptation step once.]{
    \label{fig:loop-rep}
    \includegraphics[width=0.47\textwidth]{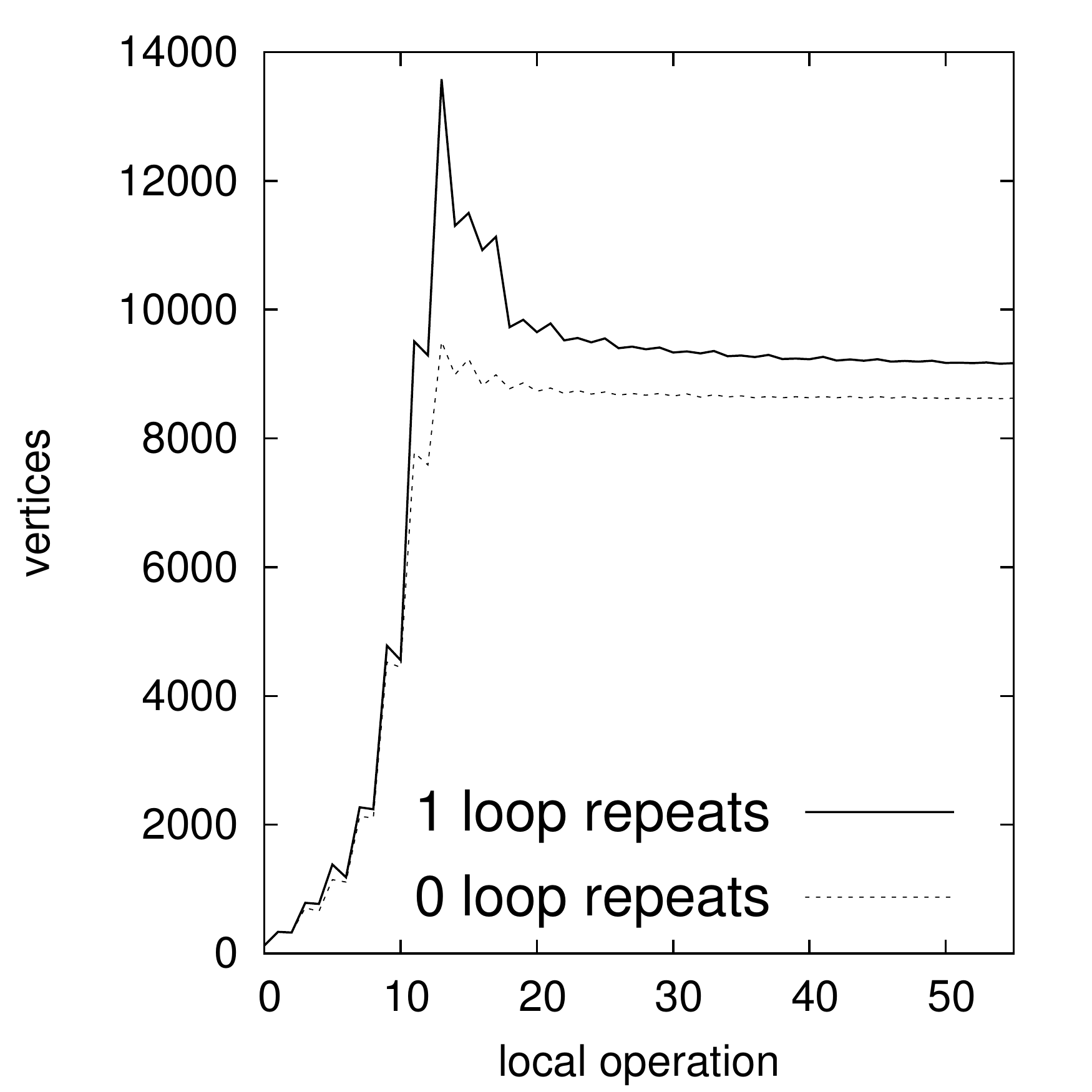}
  }
  ~~
  \subfloat[Solid line: maximum displacement. Dotted line: average
  displacement.]{
    \label{fig-nodes-distance}
    \includegraphics[width=0.47\textwidth]{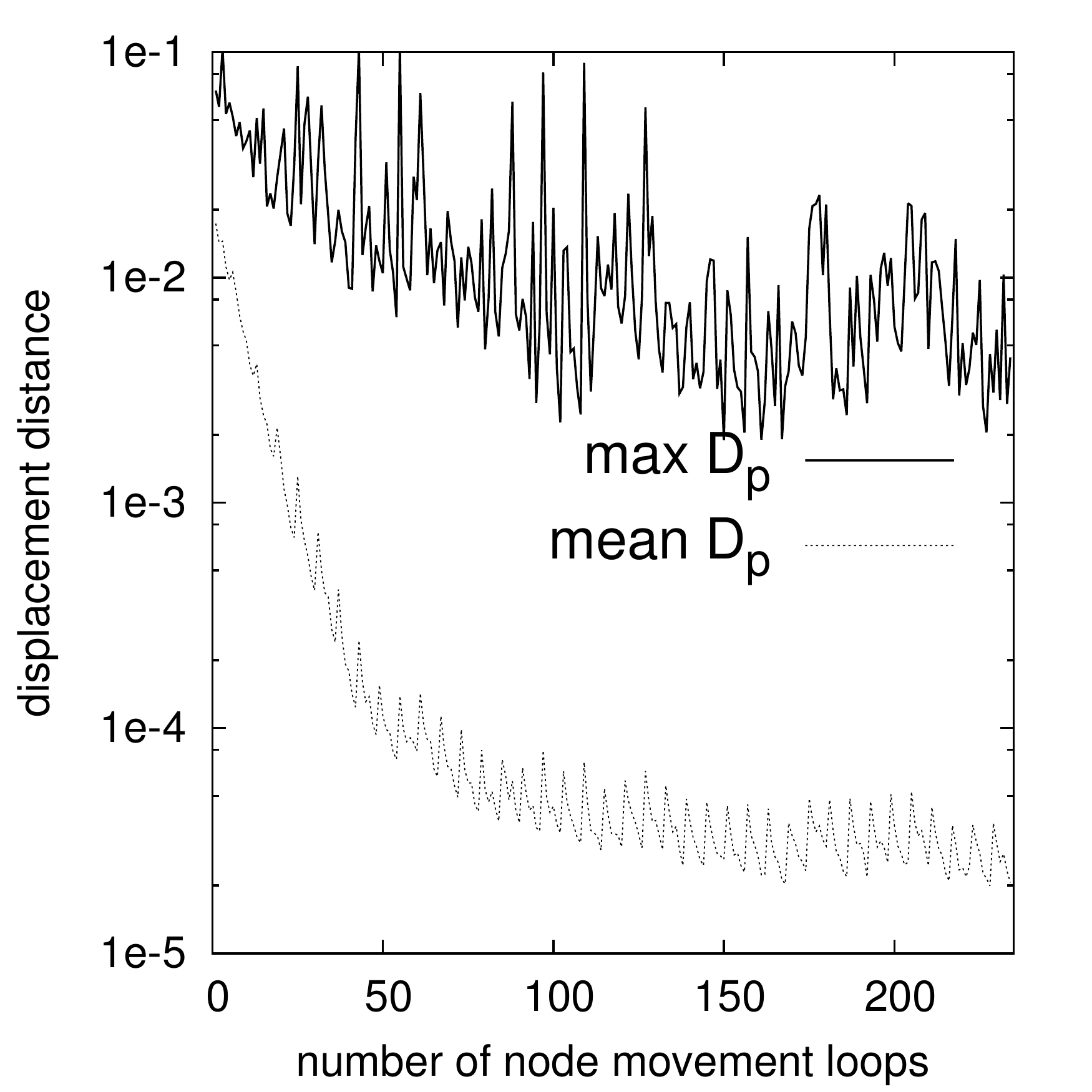}
  }

  \caption{Left: Number of vertices after refinement/derefinement. 
    Right: Maximum and average displacement of nodes for cumulative node
    movement loops.}
\end{figure}

Table \ref{func3-operations-complete} records the number of refinements,
derefinements and edge swappings performed at each iteration. The columns for
edge swapping include the sum of three separate edge swapping loops. In all
cases, note that the number of operations performed becomes small by about
$10$ iterations.

\begin{table}[h]
  \footnotesize
  \centering
  \begin{tabular}{r|rr|rr|rr|rr}
    it.&\multicolumn{2}{|c}{refinement}&\multicolumn{2}{|c}{derefinement}&\multicolumn{4}{|c}{swapping}\\
             &      &                        &      &                          &\multicolumn{2}{|c}{after refinement}&\multicolumn{2}{|c}{after derefinement}\\
             &edges &\%                      &nodes &\%                        &edges &\% &edges &\% \\
    \hline
    1 & 212  & 175.21 & 9 & 2.70 & 500 & 53.48 & 251 & 27.61\\
    2 & 415  & 128.09 & 47 & 6.36 & 1049 & 49.23 & 418 & 20.93\\
    3 & 487  & 70.38  & 55 & 4.66 & 1179 & 34.40 & 569 & 17.41\\
    4 & 996  & 88.61  & 20 & 0.94 & 1616 & 25.96 & 768 & 12.44\\
    5 & 2383 & 113.48 & 78 & 1.74 & 3275 & 24.66 & 1235 & 9.45\\
    6 & 3295 & 74.80  & 256 & 3.32 & 4556 & 19.91 & 1675 & 7.57\\
    7 & 2031 & 27.28  & 507 & 5.35 & 3926 & 13.95 & 1380 & 5.18\\
    8 & 265  & 2.95   & 497 & 5.38 & 1902 & 6.94 & 990 & 3.82\\
    9 & 189  & 2.16   & 223 & 2.50 & 1173 & 4.43 & 677 & 2.62\\
    10 & 87  & 1.00   & 132 & 1.50 & 777 & 2.98 & 535 & 2.08\\
    \vdots&\vdots&\vdots&\vdots&\vdots&\vdots&\vdots&\vdots&\vdots\\
    40 & 5   & 0.06   & 4 & 0.05 & 153 & 0.60 & 110 & 0.43\\
    \end{tabular}   
  \caption{Number of local operations for complete adaptation loop.}
  \label{func3-operations-complete}
\end{table} 

Convergence of node movement is measured by norm of the displacement of
individual nodes. For a node $p$ denote by $D_p$ the norm of its
displacement. Figure \ref{fig-nodes-distance} plots the max and mean displacement
for each iteration of node movement during the adaptation loop. While the
maximum displacement remains of the order $10^{-2}$, this value represents only
a few outlier cases, with the average displacement occurring between $10^{-5}$
and $10^{-4}.$




\paragraph{Control of the energy norm of the error.}
\label{subsubsec:first-err-energy}

\begin{figure}[h]
  \centering
  \subfloat{
    \includegraphics[width=0.47\textwidth]{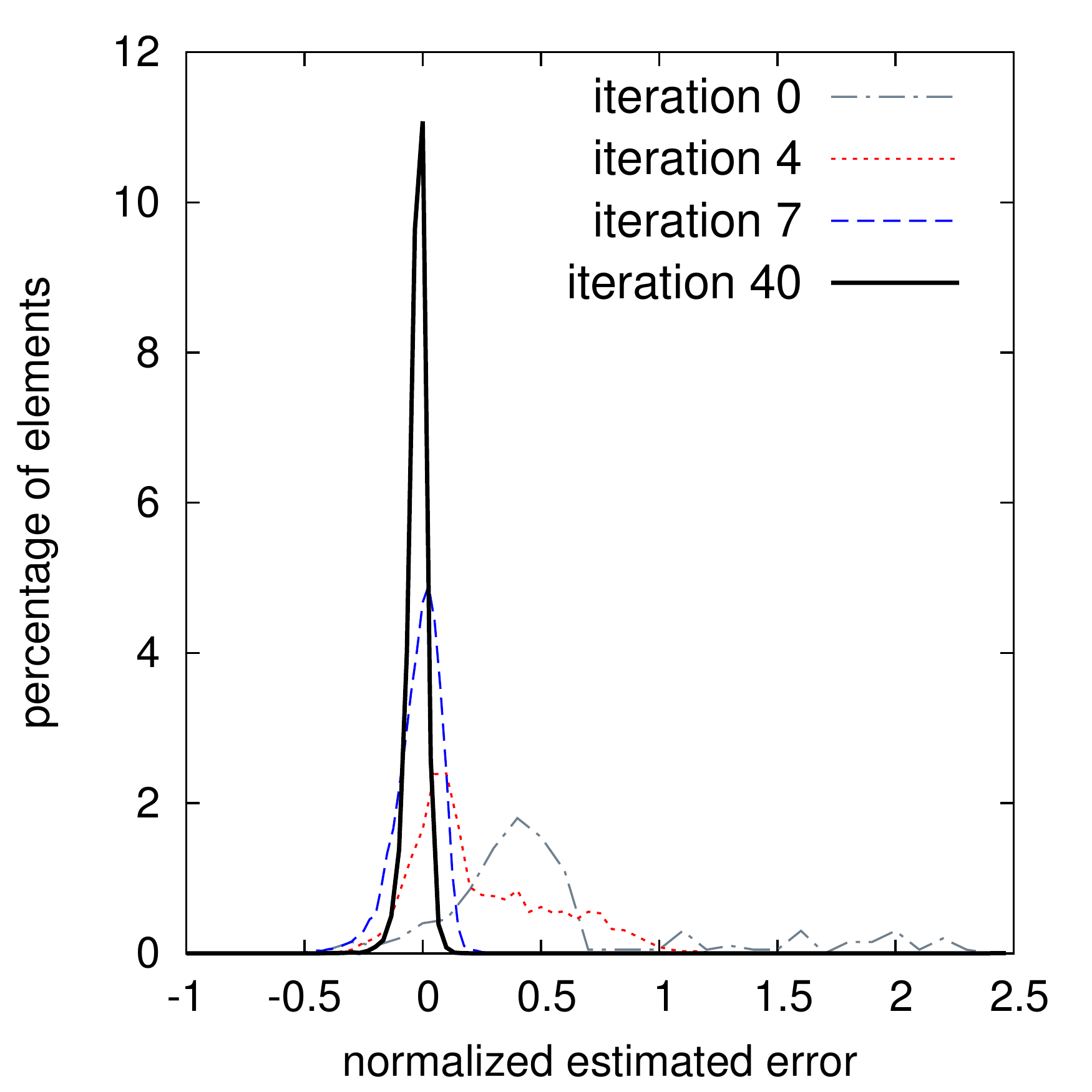}
  }
  \subfloat{
    \includegraphics[width=0.47\textwidth]{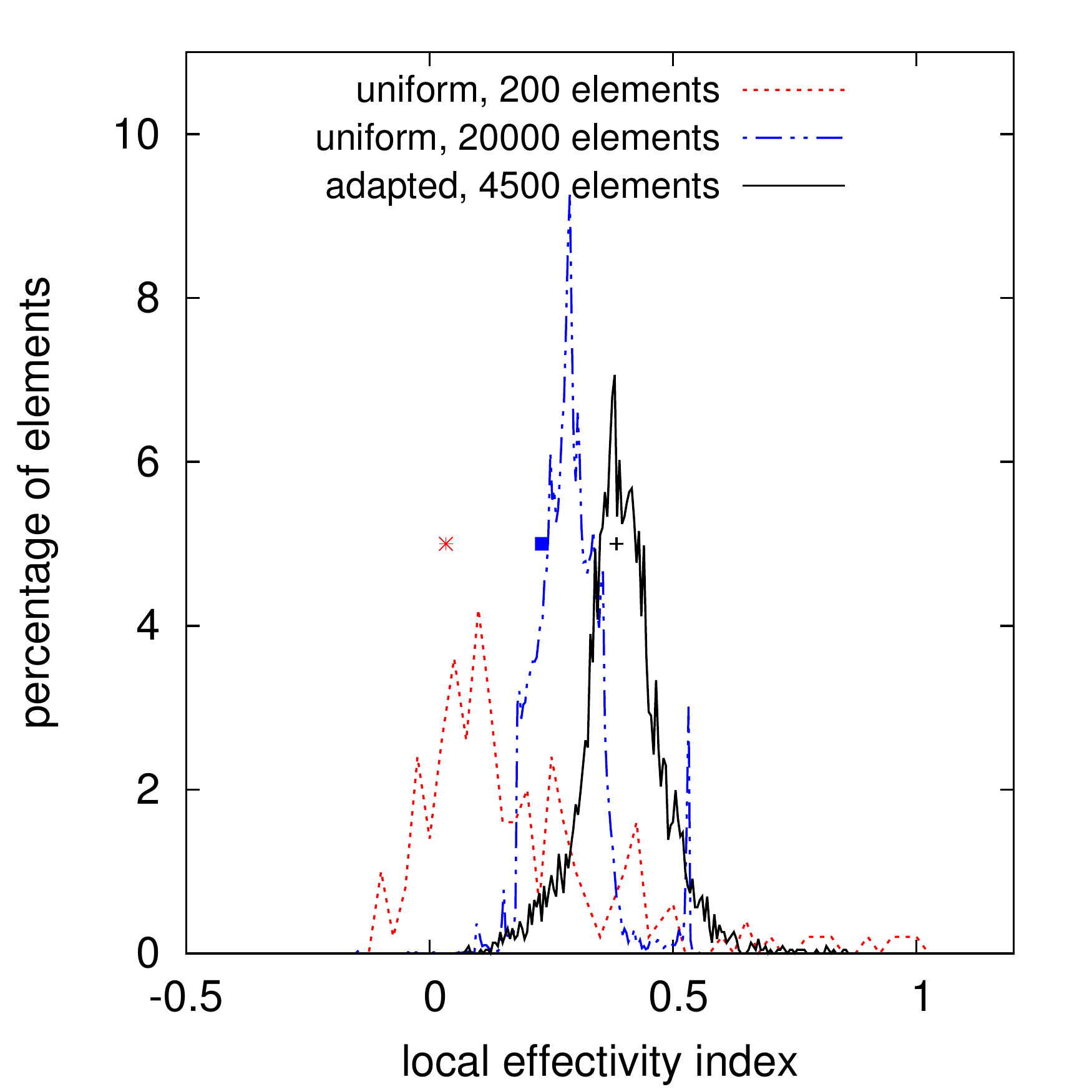}
  }
  \caption{Left: distribution of the error over elements. Middle: standard
    deviation of error distribution. Right: Distribution of the local
    effectivity. The global effectivity index is
    indicated by a red star for the coarse uniform mesh, by a blue square for
    the fine uniform mesh, and a black plus for the adapted mesh.}
  \label{func3-errdist}
\end{figure}

Next we assess the performance of the algorithm towards equidistributing the
error. The distribution of the estimated error for different iterations is
plotted in Figure \ref{func3-errdist}. The error is normalized by taking
$e_{K}=\log_{10}\left(\frac{\eta_K}{TOL/\sqrt{N_T}}\right)$. From the
figure, we see that after successive iterations the error increasingly tends to
cluster towards the target error and the distribution tends to be more normal.
Furthermore, the standard deviation of the error decreases from $0.58$ on the
initial mesh to $0.041$ on the final mesh.

Next we establish numerically the equivalence between the exact and estimated
error. The motivation is to assess to what degree we can expect to control the
exact error, both locally and globally, by equidistributing the estimated error
as in Figure \ref{func3-errdist}. Define the global effectivity index with
respect to the energy norm by $ei=\frac{\eta}{\vertiii{e_h}},$ where
$\eta=\sqrt{\sum_K\eta_K^2}.$ Theorem \ref{lem-est-aniso} says that globally the
energy norm of the error is equivalent to the estimated error, so that the
effectivity index assesses this equivalence on a given mesh. Ideally, the
effectivity index satisfies $ei\rightarrow 1$ as the mesh element size goes to
$0$, in which case we say that the estimator is asymptotically exact. This
effectivity index is studied for instance in \cite{pic03a} and \cite{micper06},
where it was observed to remain reasonably low for adapted meshes (between 2 to
5). Furthermore, for meshes adapted with target error $TOL$, the index remains
bounded as $TOL\rightarrow 0$, see for instance \cite[Table 3.7]{pic03a}.  Thus,
while the estimator is not asymptotically exact, it is clearly equivalent to the
exact error.

We define the local effectivity index for a triangle $K$ by
\begin{equation}\label{eff-loc}
  ei_K=\frac{\eta_K}{|e_h|_{1,K}}.
\end{equation}
(Recall that since $A=I$ in (\ref{eq-lap}) the energy norm is just the $H^1$
seminorm.)  The quantity (\ref{eff-loc}) measures the equivalence of the exact
and estimated error at the level of the element. In Figure \ref{func3-errdist}
we plotted the distribution of (\ref{eff-loc}) for a few meshes. For the coarse
uniform mesh with $200$ elements, note that while the global effectivity index
is quite low ($ei=1.08$), the distribution of the local effectivity index is
spread out, with a large upper tail. On the other hand, the finer uniform mesh
with $20000$ elements has a higher global effectivity index ($ei=1.70$) with a
smaller tail, suggesting that the accuracy of the estimator improves with
refinement. We also show the distribution for a relatively coarse adapted mesh
with $4500$ elements. While the global effectivity index is higher ($ei=2.42$),
the local effectivity index is more closely distributed about the global
effectivity index. What appears to happen is that refinement exaggerates the
overestimation of the error that already occurs in uniform meshes.





\paragraph{Control of the $L^2$ norm of the error.}
\label{subsubsec:first-err-l2}

In the remainder of the subsection, we will assess numerically the lower and
upper bounds for the $L^2$ error given in (\ref{eta-scale-equiv}), and briefly
present some results using the estimator (\ref{eqn:eta-scaled}) for mesh
adaptation.


\begin{figure}[ht]
  \centering 
  \subfloat[Estimated error (\ref{eq-est-aniso}) and (\ref{eqn:eta-scaled})
  vs. the exact $L^2$ error for a mesh adapted using the original estimator
  (\ref{eq-est-aniso}).]{
    \label{fig:est-vs-l2-nonscale} 
    \includegraphics[width=.47\textwidth]{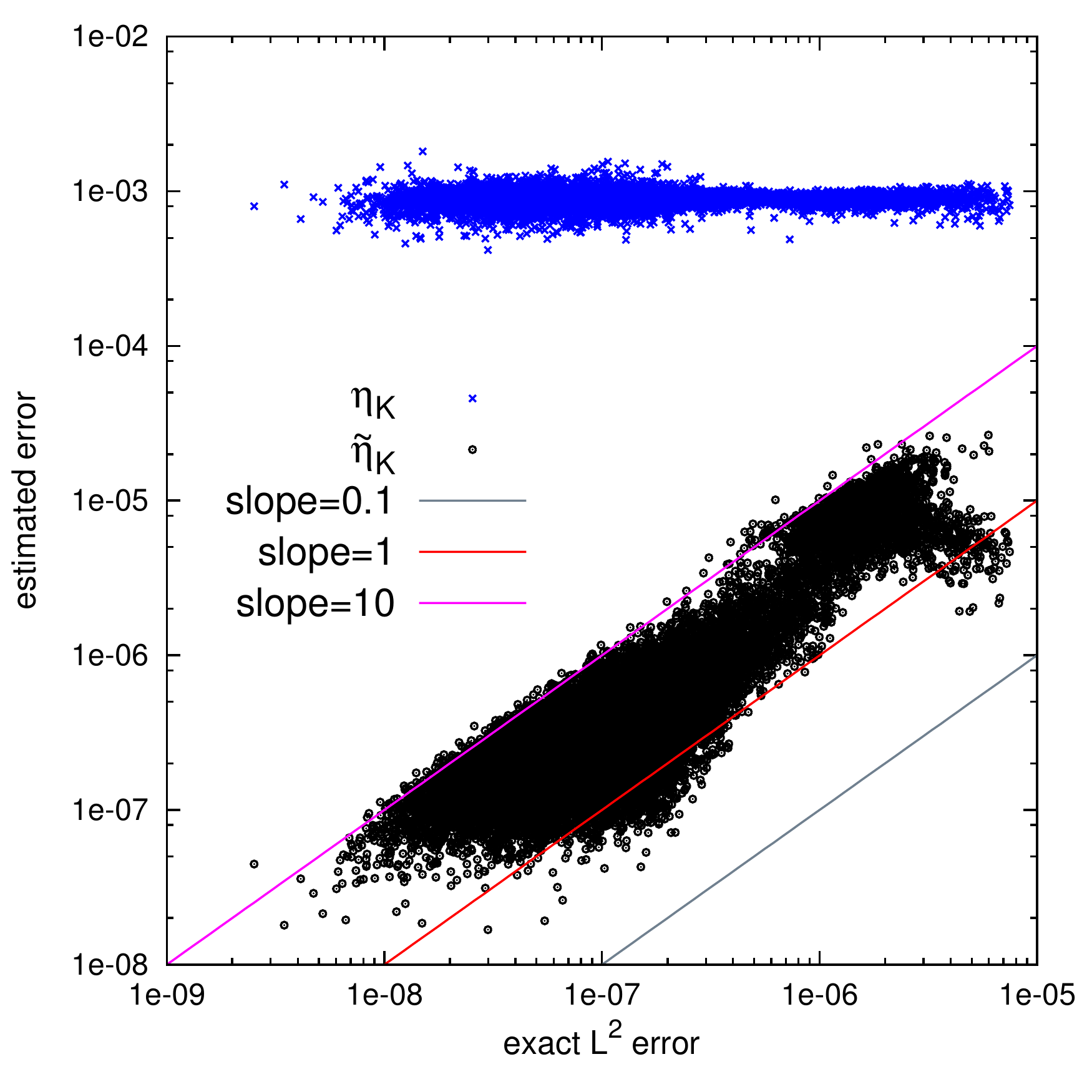}
  }
  ~~
  \subfloat[Estimated error (\ref{eqn:eta-scaled})
  vs. the exact $L^2$ error for two meshes adapted using the scaled estimator
  (\ref{eqn:eta-scaled}).]{
    \label{fig:est-vs-l2-scale} 
    \includegraphics[width=.47\textwidth]{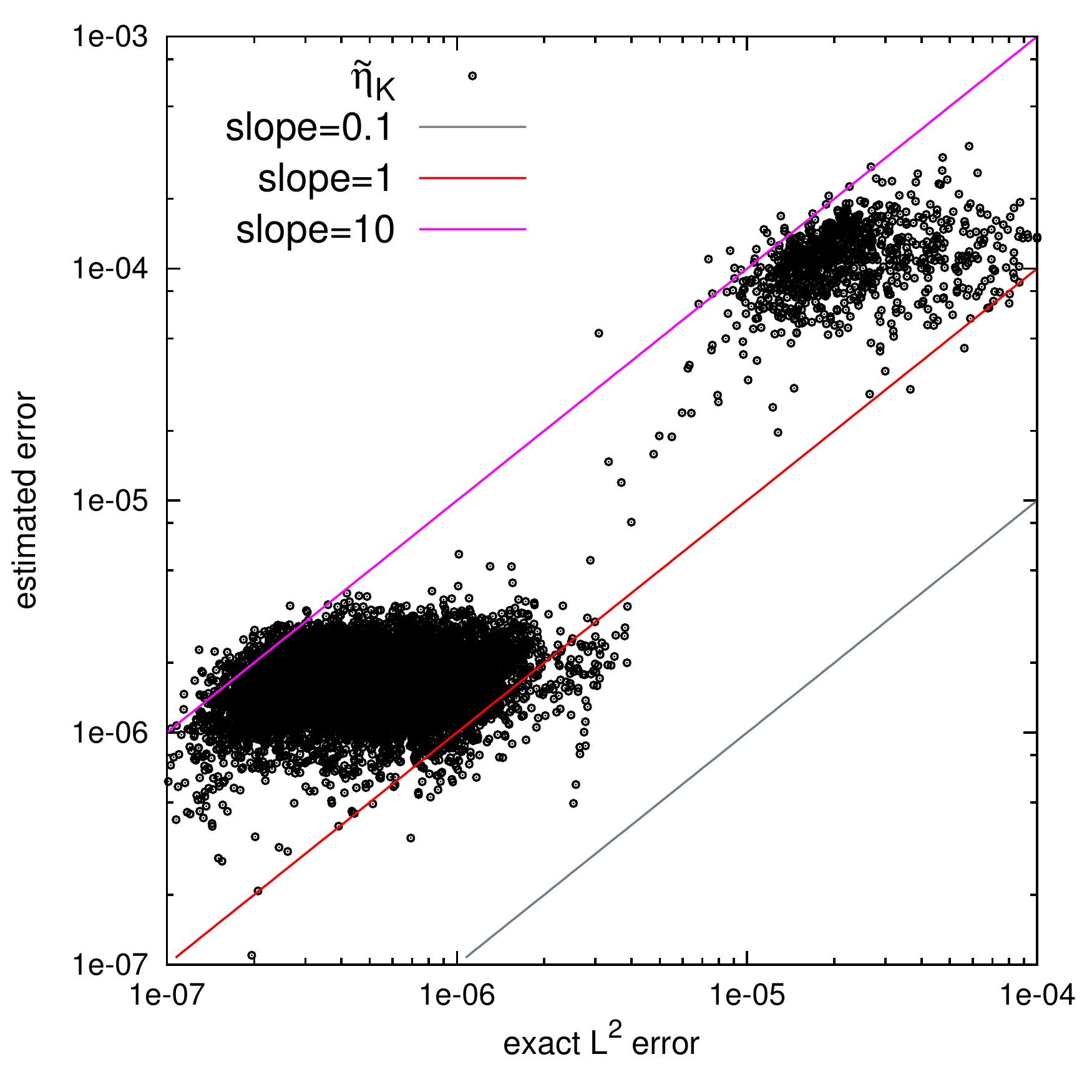}
  }
  \caption{Estimated error vs. exact $L^2$ error over elements.}
  \label{fig:est-vs-l2}
\end{figure}

Setting $TOL=0.125$, the final adapted mesh using $\eta_K$ mesh has about
$18000$ elements. Figure \ref{fig:est-vs-l2-nonscale} records for each element the
estimated error $\eta_K$ (in blue) and $\tilde{\eta}_K$ (in black) vs. the exact
$L^2$ error $\|e_h\|_{0,K}$. While $\eta_K$ remains within less than $1$ order
of magnitude, the exact $L^2$ error is spread by about $3$ orders. Therefore,
equidistributing the estimator $\eta_K$ does not lead to equidistribution of the
$L^2$ error. In verifying the lower and upper from (\ref{eta-scale-equiv}), we
see that the local effectivity index
$\tilde{ei}_K=\frac{\tilde{\eta}_K}{\|e_h\|_{0,K}}$ remains between about $0.1$
and $10,$ with the lower bound appearing sharp.

Next we adapt the mesh using the scaled error $\tilde{\eta}_K$. The mesh is
adapted using the hybrid error approach from \cite{boiforfor12} for the
hierarchical estimator. That is, edge refinement and node removal are used to
control the global $L^2$ error level (here using (\ref{eqn:eta-scaled})), while
edge swapping and node movement are used to equidistribute the error by
minimizing the energy norm (here using (\ref{eq-est-aniso})). Results of two
meshes adapted using different target error levels are presented in
Figure \ref{fig:est-vs-l2-scale}: a mesh with about $1000$ elements (top right)
and one with $14000$ (bottom left). The spread of the $L^2$ error is
significantly lower, going from $3$ orders of magnitude to about $1.5$.




\begin{figure}[h]
  \centering
  \subfloat{
    \label{func3-energy-scale-v-non}
    \includegraphics[width=0.50\textwidth]{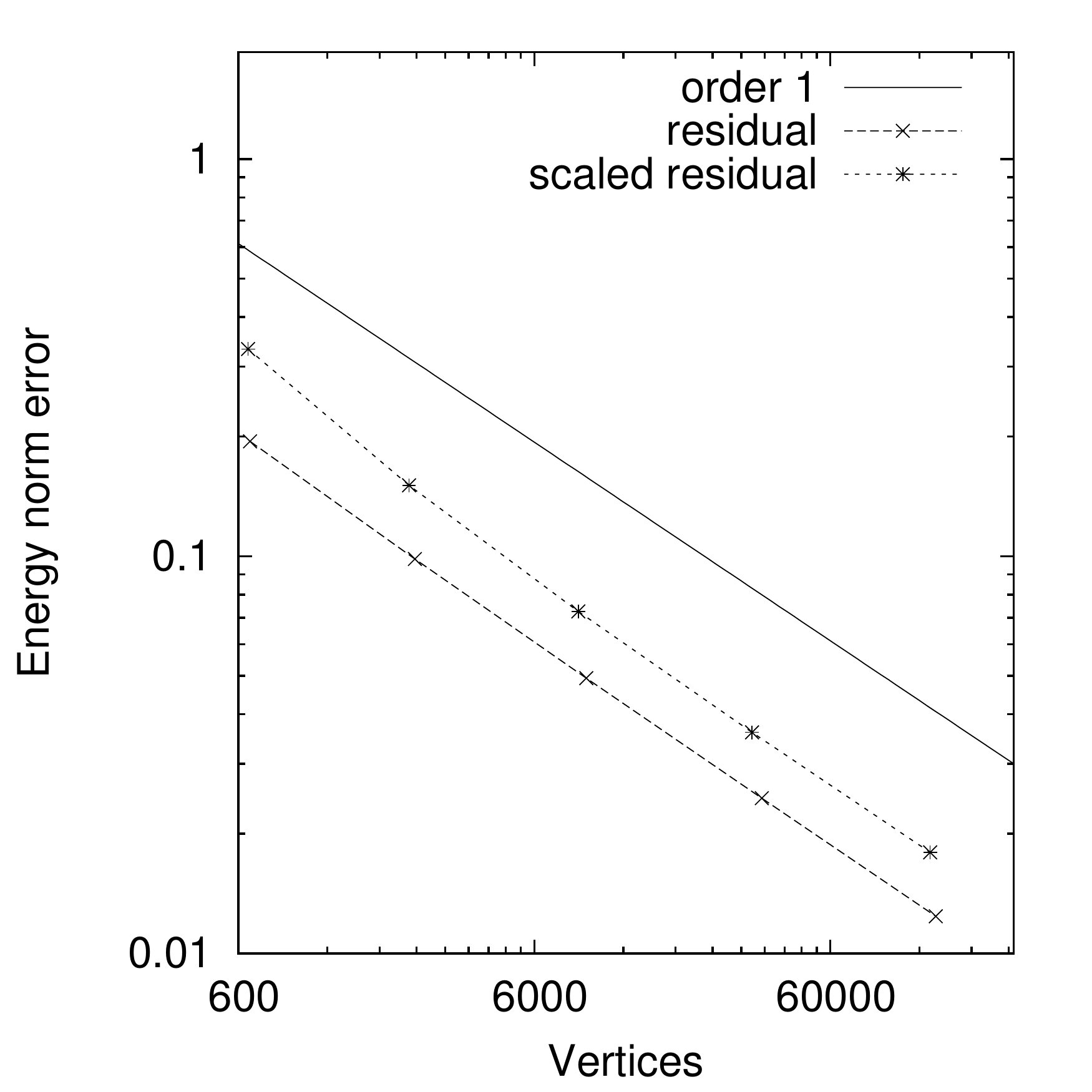}
  }
  \subfloat{
    \label{func3-L2-scale-v-non}
    \includegraphics[width=0.50\textwidth]{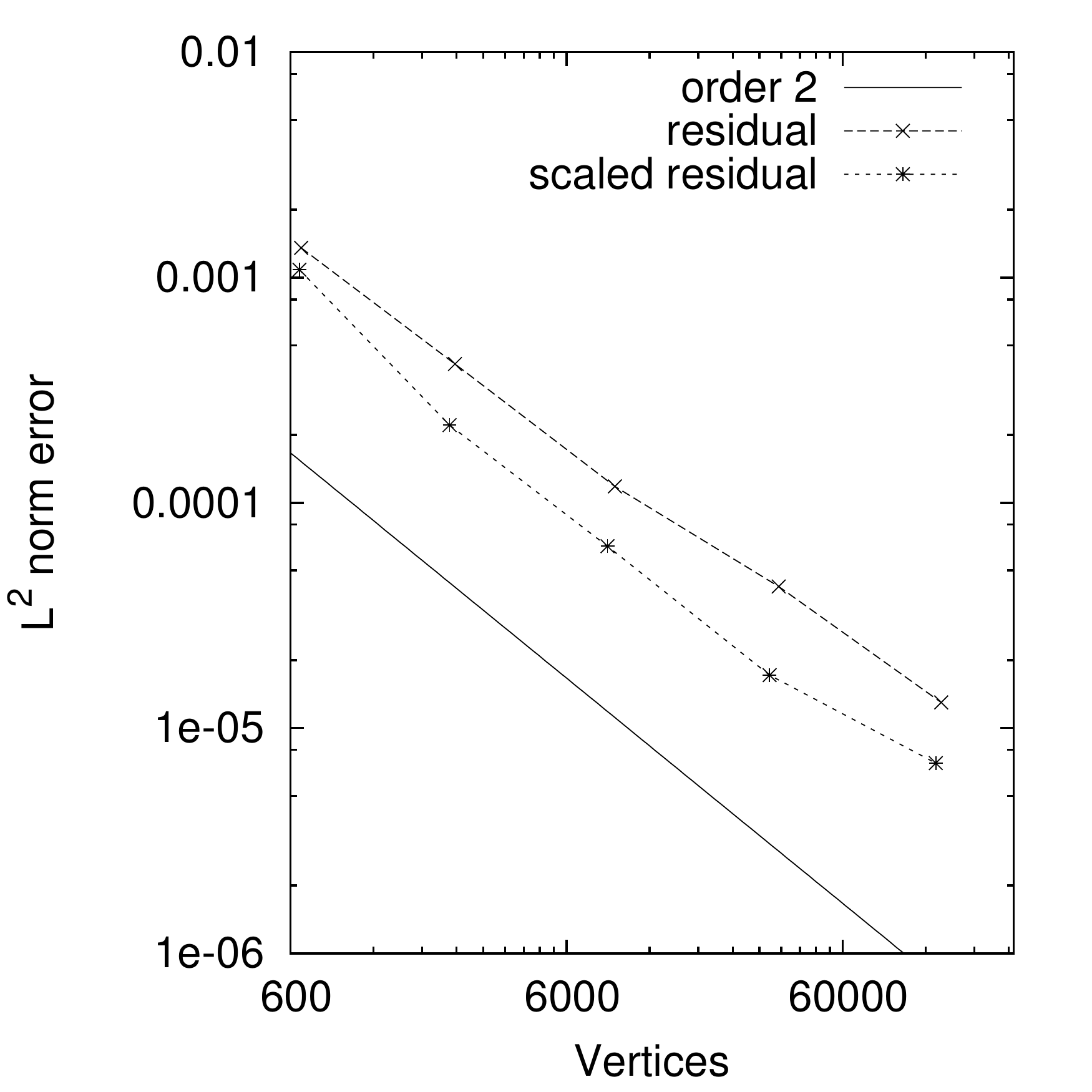}
  }
  \caption{Energy norm (left) and $L^2$ norm (right) error calculations for
    $u_1$ for residual estimators.}
  \label{func3-err-scale-v-non}
\end{figure}

We compare global error calculations using the scaled and non-scaled estimator in
Figure \ref{func3-err-scale-v-non}. Clearly, the non-scaled estimator results
in lower energy norm error for the same degrees of freedom. Moreover, as
predicted, the scaled error significantly improves the results for the $L^2$
error.


\begin{figure}[ht]
  \centering
  \subfloat{
    \label{fig:mesh-func3-orig}
    \includegraphics[width=0.48\textwidth]{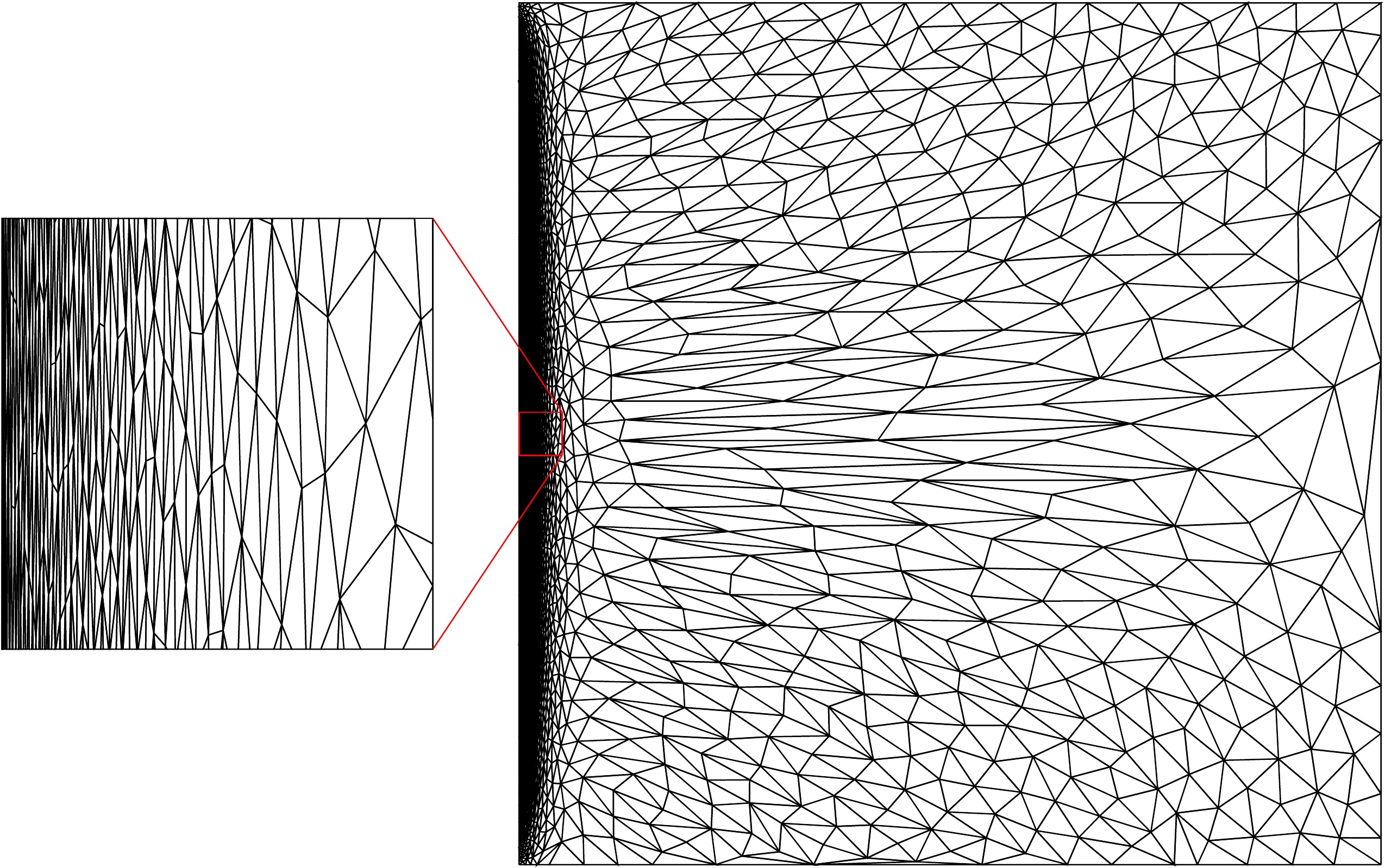}
  }
  \subfloat{
    \label{fig:mesh-func3-scaled}
    \includegraphics[width=0.48\textwidth]{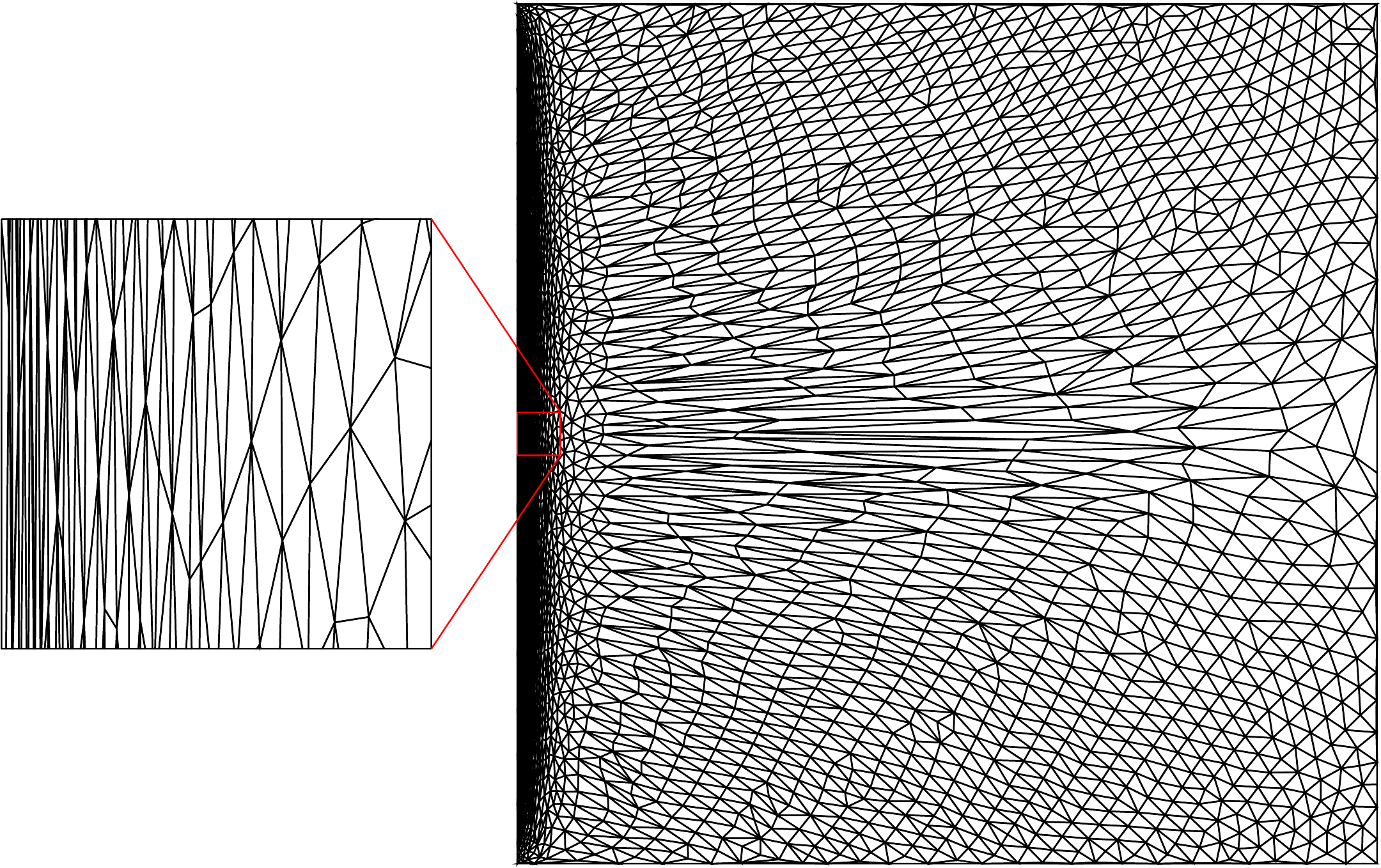}
  }
  \caption{Adapted meshes with $2500$ vertices using the $H^1$
    estimator (left) and scaled estimator (right).}
  \label{fig:meshes-func3-origscaled}
\end{figure}

Lastly, we compare meshes adapted with the scaled and unscaled estimators in
Figure \ref{fig:meshes-func3-origscaled}. Both meshes have roughly the same
number of vertices/elements, but the distribution of the elements for the mesh
adapted using the scaled estimator is much more spread throughout the domain,
while that in the original estimator tends to concentrate near the boundary
$x=0.$ This observation is really not surprising, since scaling the estimator
by the smallest eigenvalue will permit elements to be larger in areas where the
$H^1$ error is largest, such as the boundary layer. Generally, we expect that
the error converges at a higher order in the $L^2$ norm than for the $H^1$
seminorm error. But as seen in Figure \ref{fig:est-vs-l2-scale}, this higher
order is not just global (at the level of the domain), but local (element
level). This observation about mesh quality being related to the norm will be
further confirmed in what follows when we consider the hierarchical estimator,
which natively controls the $L^2$ error.

\subsubsection{Comparison of the adaptation methods}
\label{subsubsec:first-conv}


\subparagraph*{Qualitative comparison.}
\label{subsubsec:first-qual}

Figure \ref{func3-meshes} presents examples of adapted meshes with about $2500$
vertices produced by each method. In all cases, we see that the meshes contain
elements that are very stretched near the boundary layer. Note that in
general the meshes obtained from the residual estimators tend to have more
elements near the boundary layer, while the meshes from Hessian and hierarchical
methods tends to be more spread out. The difference in mesh density is likely
due to the target norm used by each method. As discussed in the previous
section, the target norm is related to the local order of convergence, which
affects local element size.

Another note is that the mesh for the Hessian is quite regular in the top and
bottom right corners. The initial mesh is regular, consisting of right
triangles as in Figure \ref{func3-mesh-init}, so what seems to be happening is
that in these regions the main operation performed is edge refinement. In
particular, node displacement appears to be less smooth for the Hessian.
Repeating the adaptation loop starting from a non-uniform mesh does in fact
result in a final mesh which is not regular.

\begin{figure}[!htbp]
  \centering
  \includegraphics[width=0.8\textwidth]{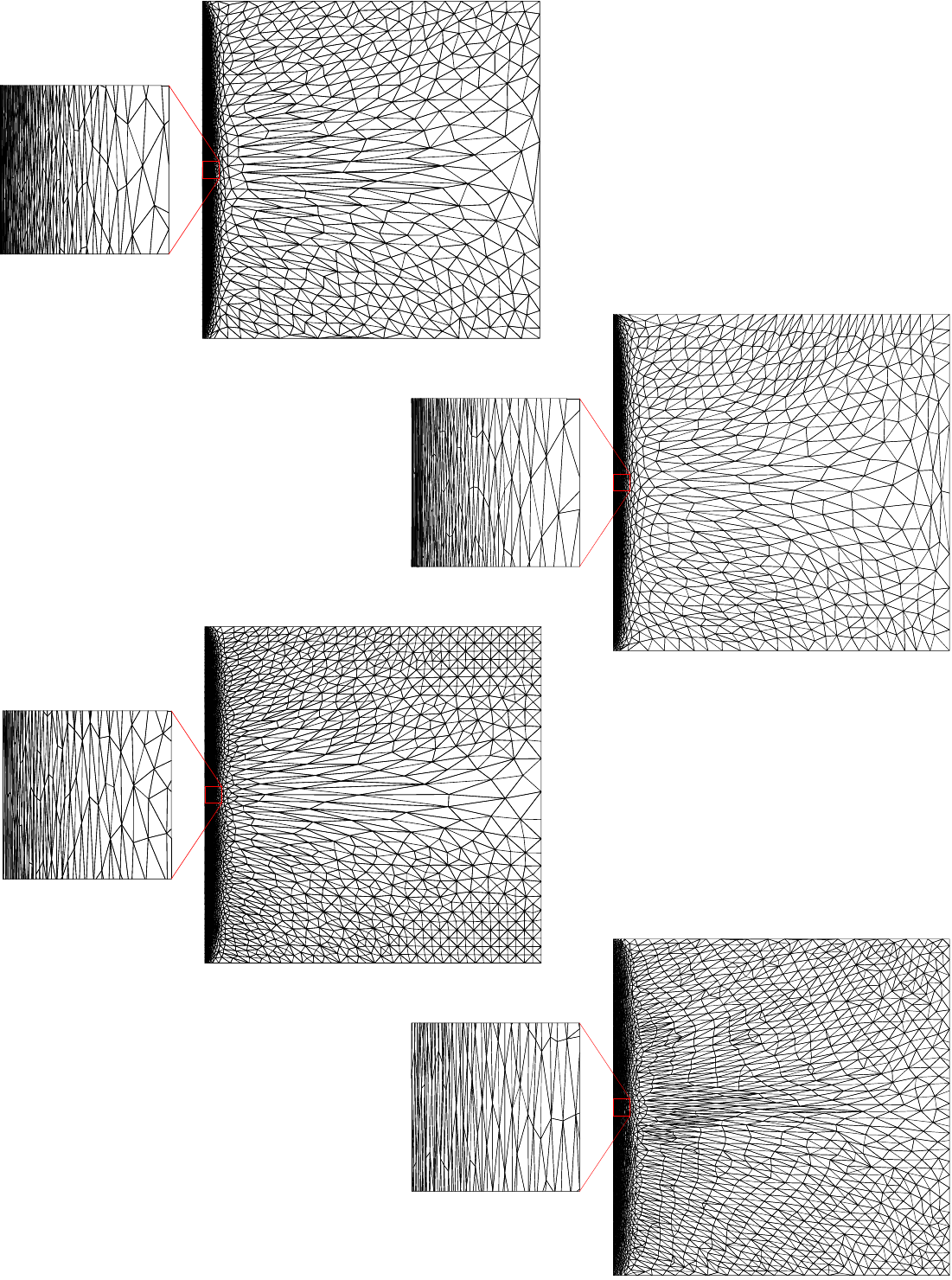}
  \caption{Adapted meshes for $u_1$ with approximately 2500 vertices,
    with zoom near the boundary at $x=0$. From top to bottom: residual
    (element), residual (metric), Hessian, hierarchical.}
  \label{func3-meshes}
\end{figure}


\paragraph{Analytical comparison.}
\label{subsubsec:first-ana}


The error is reported in Figure \ref{func3-err} as a function of the number of
vertices. Recall that for a regular mesh in $2D$, the number of vertices is
roughly proportional to $(\frac{1}{h})^2$, so that the theoretically optimal
(logarithmic) slope corresponding to (\ref{order1}) is $-1/2$, while for
(\ref{order2}) it is $-1$.

Figure \ref{func3-err} reports the error in the energy and $L^2$ norm. We see
that all methods approach the theoretical rate of convergence for the energy
norm. Moreover the hierarchical method, which reports the largest error,
remains about $1.3$ times higher than the residual element-based method, which
reports the smallest error. The convergence for the $L^2$ norm, on the other
hand, appears to be more erratic, with none of the methods achieving the optimal
rate of convergence. Here the hierarchical method reports the lowest error, the
residual methods report an error $2$ to $3$ times as large, while the Hessian
method reports an error about $4$ to $5$ times as large. Note that for both,
the energy and $L^2$ norms, the results for both residual methods are close.


\begin{figure}[h]
  \centering
  \subfloat{
    \label{func3-energy}
    \includegraphics[width=0.50\textwidth]{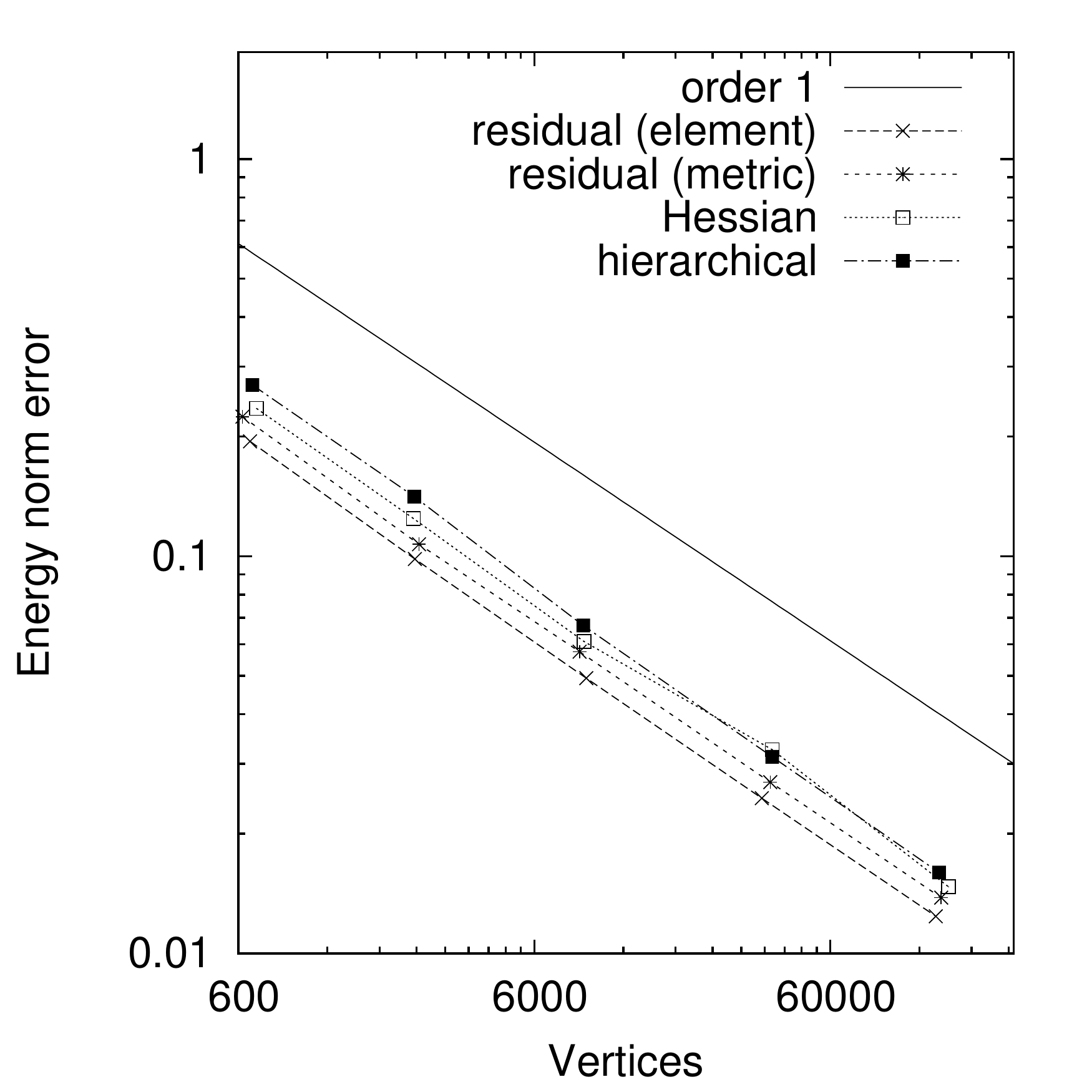}
  }
  \subfloat{
    \label{func3-L2}
    \includegraphics[width=0.50\textwidth]{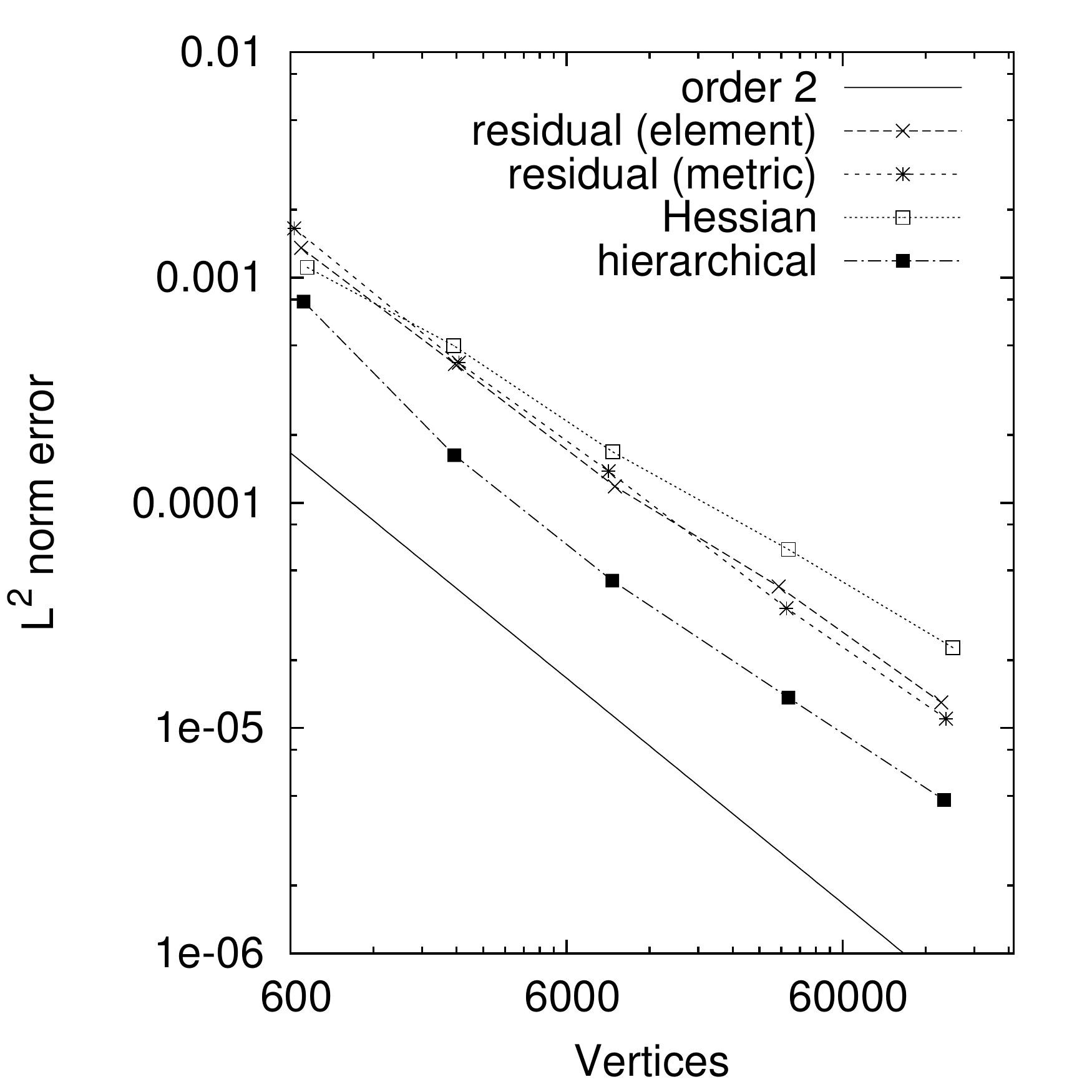}
  }
  \caption{Energy norm (left) and $L^2$ norm (right) error calculations for $u_1$.}
  \label{func3-err}
\end{figure}


In Table \ref{func3-dist-final} we record the mean and variance of the
distribution of the error over the elements. While Figure \ref{func3-err} shows
that the global energy norm is lowest for the residual methods and highest for
the hierarchical, the situation is reversed here, with the hierarchical
reporting the lowest mean error. This can be partially accounted for by the
fact that the residual methods result in the lowest standard deviation for the
energy norm of the error, which is likely the result of the equidistribution of
the estimated error achieved by the adaptive method. For the $L^2$ error, as
expected the hierarchical method reports the lowest mean and standard deviation.

We remark that the target norm for the Hessian adaptation is $L^\infty,$ so it
is possible that the Hessian does the best job equidistributing the error in the
$L^\infty$ norm. We did not calculate the $L^\infty$ norm. Furthermore, it
should be noted that in \cite{losala11b}, a Hessian-based error estimator was
developed to control the the $L^p$ error for $1\leq p<\infty.$ Adaptation is, as
before, done by constructing a metric, which turns out to be the the same as that
discussed in Section \ref{subsec:hess} with the eigenvalues appropriately scaled
for the choice of $p$, see \cite[Section 2]{losala11b}. It is reasonable to
expect that the results for the $L^2$ error could be improved using their
estimates.

\begin{table}[h]    
  \footnotesize
  \centering
  \begin{tabular}{|l|l|l|l|l|l|}
    \hline                                         
    method                 &vertices  &mean &st. dev. &mean &st. dev. \\
    & & $\|\nabla(e_h)\|_K$  &$\|\nabla(e_h)\|_K$  & $\|e_h\|_K$ & $\|e_h\|_K$\\
    \hline                                         
    residual (element)	& 657		&5.37e-03 	& 1.20e-03    &2.16e-05	&3.16e-05\\
    residual (metric)	& 639		&6.04e-03 	& 1.74e-03    &2.63e-05	&3.93e-05\\
    Hessian			& 691		&5.27e-03 	& 3.84e-03    &2.27e-05	&2.07e-05\\
    hierarchical		& 670		&5.42e-03 	& 5.30e-03    &1.84e-05	&1.21e-05\\
    \hline                                   
    residual (element)	& 2369		&1.42e-03 	& 2.82e-04    &3.25e-06	&5.13e-06\\
    residual (metric)	& 2251		&1.62e-03 	& 4.09e-04    &3.55e-06	&4.99e-06\\
    Hessian			& 2342		&1.51e-03 	& 1.08e-03    &4.40e-06	&5.98e-06\\
    hierarchical		& 2356		&1.41e-03 	& 1.54e-03    &2.14e-06	&1.08e-06\\
    \hline            
    residual (element)	& 8992		&3.63e-04 	& 7.24e-05    &4.51e-07	&7.65e-07\\
    residual (metric)	& 8701		&4.01e-04 	& 8.88e-05    &4.99e-07	&8.20e-07\\
    Hessian			& 8842		&3.85e-04 	& 2.56e-04    &6.63e-07	&1.09e-06\\
    hierarchical		& 8786		&3.55e-04 	& 3.62e-04    &2.96e-07	&1.72e-07\\
    \hline                       
    residual (element)	& 35218		&9.12e-05 	& 1.76e-05    &7.13e-08	&1.44e-07\\
    residual (metric)	& 33448		&1.02e-04 	& 2.14e-05    &8.16e-08	&1.58e-07\\
    Hessian			& 38290		&8.78e-05 	& 7.92e-05    &1.06e-07	&1.85e-07\\
    hierarchical		& 38205		&7.90e-05 	& 8.10e-05    &3.95e-08	&2.97e-08\\
    \hline
  \end{tabular}   
  \caption{Distribution of error.}
  \label{func3-dist-final}
\end{table} 


\paragraph{Computational performance.}
\label{subsubsec:first-comp}


Figure \ref{CPU-time} records the CPU time for the adaptation part of each
iteration of the loop. For each method, we chose the global error level so that
the final mesh has about $9000$ vertices. The number of vertices at each
iteration is recorded in Figure \ref{nodes-iter}, and the gap between lowest and
highest at the last step is about $6$\%. We find that metric adaptation
requires much less time than the other methods, and in the plot both metric
based methods appear superimposed. This result is not surprising. The only
that value we really need to keep track of is the metric tensor at each
vertex. The local error calculations are relatively insubstantial compared to
those required for element-based adaptation.

In addition, note that the element-based residual method takes roughly $5$ to
$6$ times that of the hierarchical estimator. For one thing, the residual
estimator requires the contribution from discontinuous functions. These
functions cannot be directly interpolated after performing local modifications,
and must be recomputed on each element/edge. Especially problematic is the
calculation of the singular value decomposition. Even for a $2\times 2$ matrix
$A$, it can be numerically disastrous to calculate the singular value
decomposition of $A$ directly by first computing $AA^T$ \cite{golvan96},
and instead it is recommended to use an iterative method. We have used the
implementation provided by DGESVD from LAPACK. Overall, it was found that this
computation takes between $13-18$\% of the total adaptation process. Another
contribution towards increased CPU time is due to the fact that the jump term
depends on more than one element. As mentioned in Section \ref{subsubsec:swap},
when performing edge swapping, the error needs to be calculated on an enlarged
patch as in Figure \ref{fig:ext-patch} in order to accurately compute the jump
term. The construction and handling of this patch introduces significant
computational overhead.

\begin{figure}[h]
  \centering
  \subfloat[CPU-time in seconds]{
    \label{CPU-time}
    \includegraphics[height=0.42\textheight]{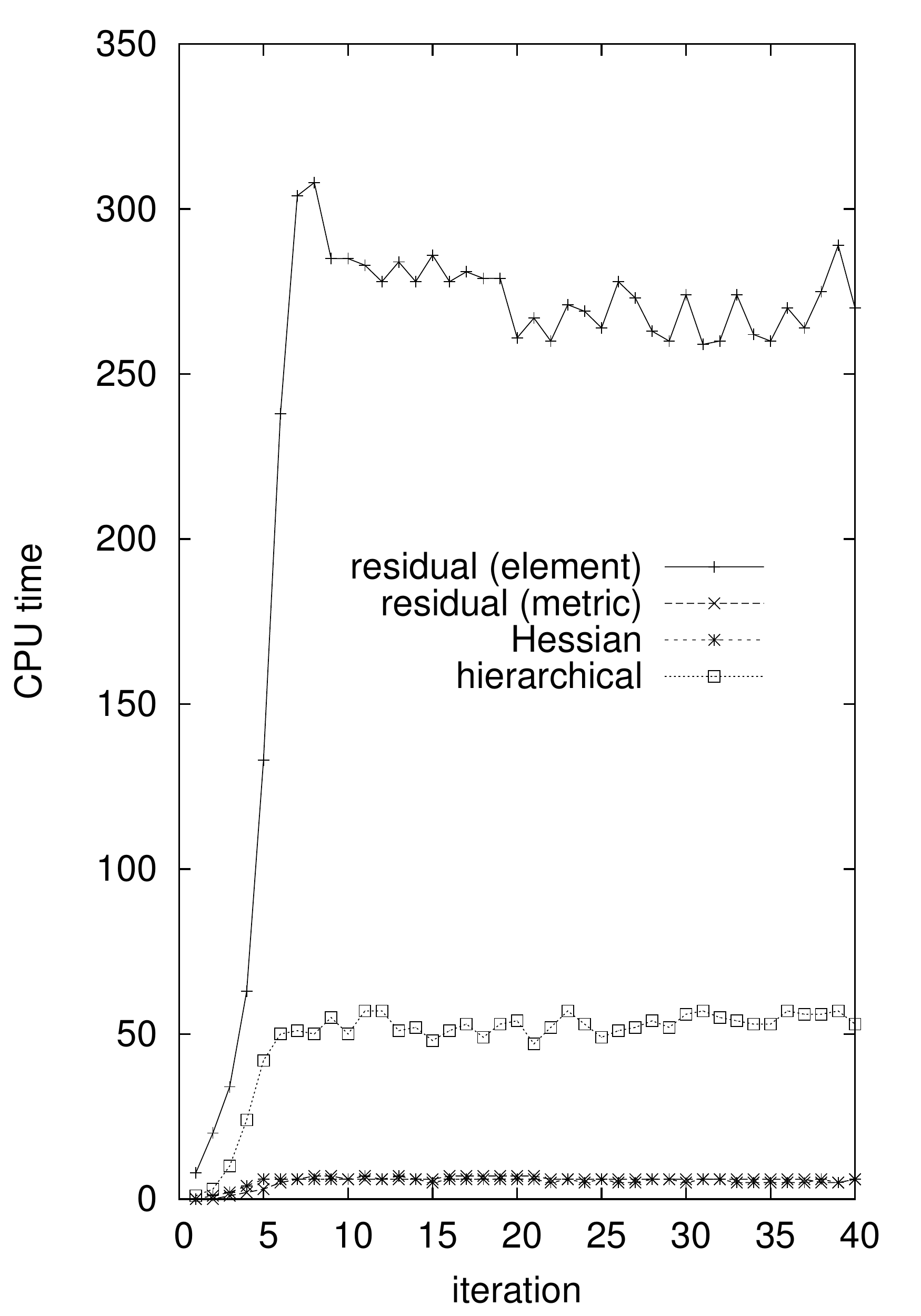}
  }
  \subfloat[Number of nodes]{
    \label{nodes-iter}
    \includegraphics[height=0.42\textheight]{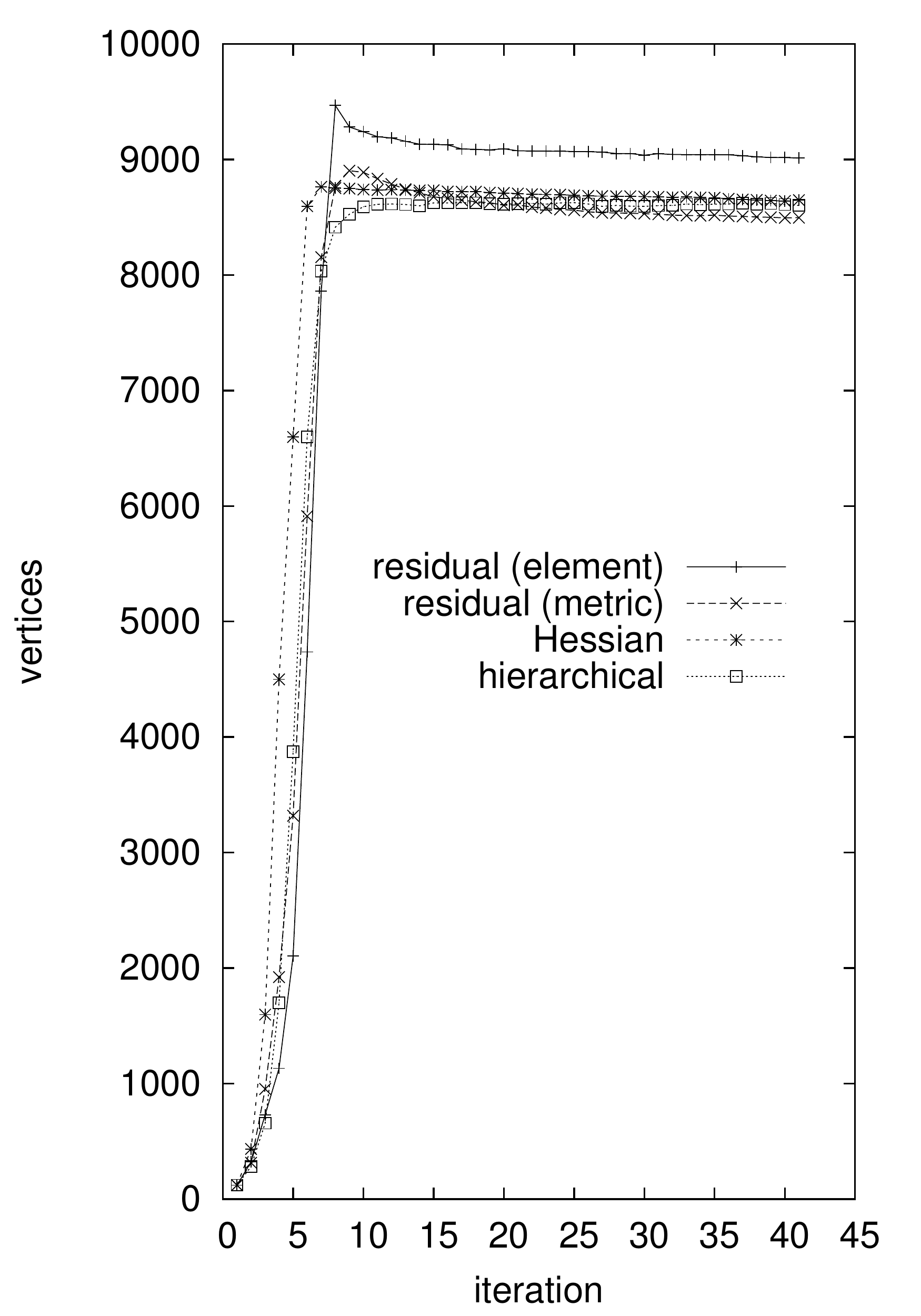}
  }  
  \caption{Left: CPU time for each iteration. Right: number of nodes at each iteration.}
  \label{cpu-nodes}
\end{figure}


\subsection{Second test case}
\label{subsec:second}


With the same parameters for problem (\ref{eq-lap}) as test case 1, we consider
the function taken from \cite{mit10}
\begin{equation*}
  u_2=\tan^{-1}(\alpha(r-r_0)),
\end{equation*}
where $r=\sqrt{(x+0.05)^2+(y+0.05)^2},$ and $r_0=0.7$. Thus, we have a circular
a wave-front type solution, centered at $(-0.05,-0.05)$ with a transition region
with thickness of order $\alpha^{-1}.$ We will run simulations with both
$\alpha=100$ and $\alpha=1000.$


\subsubsection{Qualitative comparison}
\label{subsubsec:second-qual}

In Figures \ref{wave2-meshes} we show some examples of adapted meshes. In each
case, the mesh follows what we would expect from the solution. The elements are
mainly concentrated near the wave-front where the gradient is steep in the
direction orthogonal to the wave, and with the alignment of the elements in this
region reflecting the curvature. Outside this region, variation in the solution
is reduced significantly, so that the elements can be much larger. What is
striking, however, is the difference between the mesh produced by the
hierarchical method compared to the others. For the hierarchical method, the
mesh is more spread out and less concentrated near the wave-front. As discussed
in Section \ref{subsec:first}, we attribute this difference to the target norm
used. Another feature of interest, seen in the zoom to the wave-front, is a
sub-layer of elements where the mesh is coarser. In this region, the function
is almost linear in the direction orthogonal to the wave-front, so that the
error is somewhat smaller than in the immediate surroundings.

\begin{figure}[!htbp]
  \centering
  \includegraphics[width=0.8\textwidth]{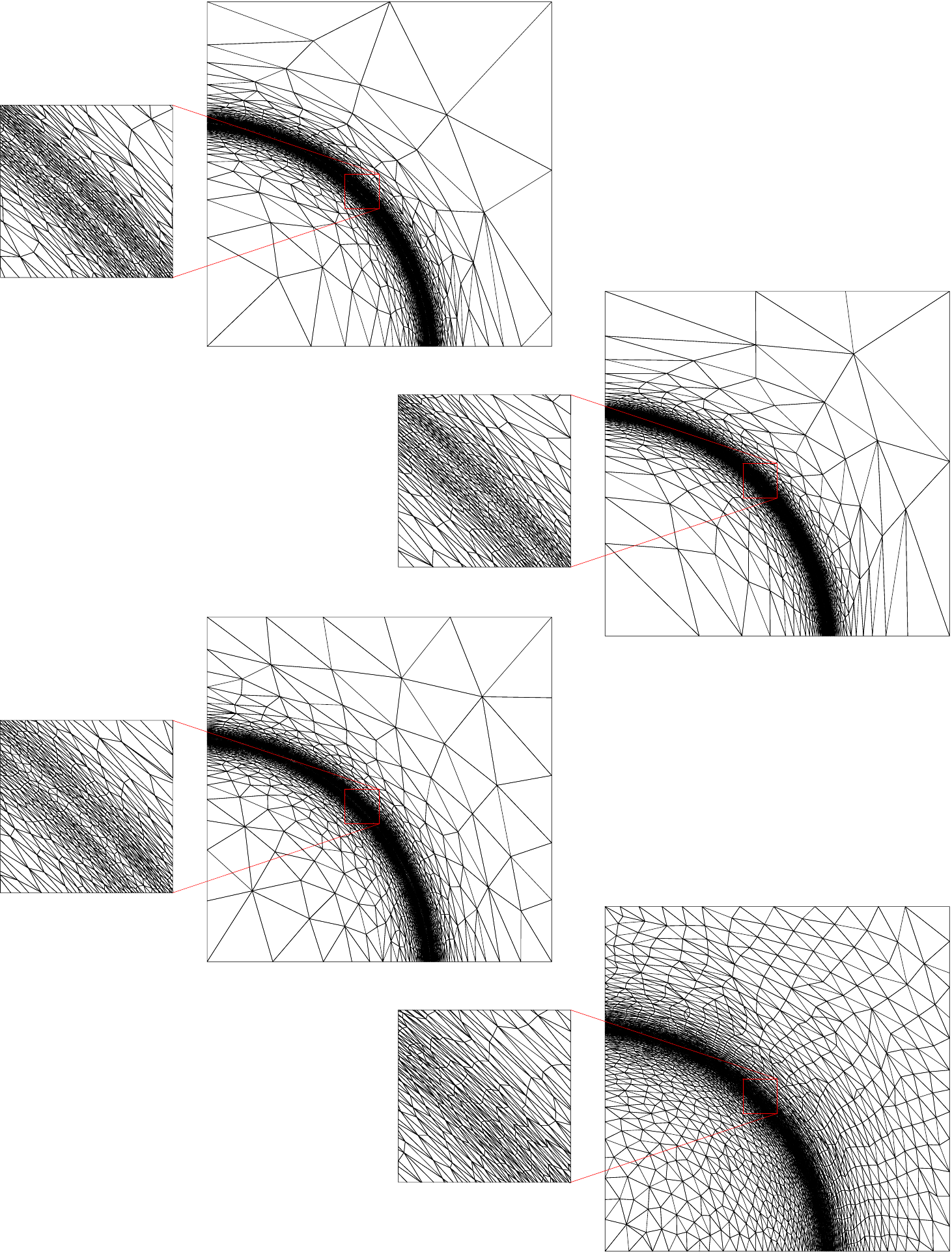}
  \caption{Adapted meshes for $u_2$, $\alpha=100$ with approximately
    3000 vertices, with zoom to wave front. From top to bottom:
    residual (element), residual (metric), Hessian, hierarchical.}
  \label{wave2-meshes}
\end{figure}



\subsubsection{Analytical comparison}
\label{subsubsec:second-ana}


\paragraph{Calculation of the residual term.}
\label{par:sterm}

This test case highlights one of the drawbacks of residual estimators. To
calculate the error $\eta_K$, we need to evaluate the integrals
$\int_K f^2\,\,\mathrm{d}x$ of the source term $f.$  (Since $A=I$ and
$u_h$ is piecewise linear, we get $R_K(u_h)=f$.)  At the wave front with
$\alpha=1000$ this value is very difficult to compute accurately. The immediate
effect on adaptation was that some elements were not being refined despite being
flagged as having large error. In particular, the algorithm reported
\begin{align}\label{res:explode}
  \int_{K_1} f^2\,\,\mathrm{d}x
  + \int_{K_2} f^2\,\,\mathrm{d}x
  & \gg \int_K f^2\,\,\mathrm{d}x,
\end{align}
where $K_1,\,K_2$ are obtained by refining an edge of $K.$

We improve the accuracy of the integral by subdivision. For a given quadrature
rule $\mathcal{Q}_K$ on an element, we divide the triangle into $4$ by splitting
the edges in half, then define the subdivided quadrature rule
$\mathcal{Q}_{K,1}$ to be that composed of four copies of the original, each
weighted $\frac{1}{4}|K|.$ The effect is that if the rule we had before was
$Ch^k$ accurate, the subdivided scheme is $\frac{C}{2^k}h^k$ accurate. We
therefore increase the accuracy without introducing a large constant from a
higher-order method. See Algorithm \ref{alg:sub-res} for implementation details.

\begin{algorithm}[h]
  \caption{Calculation of the residual on element $K$.}
  \label{alg:sub-res}
  \begin{enumerate}
  \item Calculate the residual $\tilde{R}_0$ with the original quadrature
    $\mathcal{Q}_{K,0}$.
  \item Choose $\epsilon>0$ to be small. For $i=0,1,2$ do the following:
    \begin{enumerate}
    \item \label{itm:subdiv} Subdivide the current quadrature $\mathcal{Q}_{K,i}$
      into $\mathcal{Q}_{K,i+1}$ and compute the residual $\tilde{R}_{i+1}.$
    \item If $i=2$ or if $\frac{|\tilde{R}_i-\tilde{R}_{i+1}|}{\tilde{R}_{i+1}}\leq\epsilon$
      then accept $\tilde{R}_{i+1}$ as the residual and exit.
    \end{enumerate}
  \end{enumerate}
\end{algorithm}

Since each time we subdivide, we multiply the number of Gauss points by $4$,
subdivision can quickly become expensive. We always subdivide at least once, so
that at the very least we need to compute values at $(1+4)B_G$ points, where
$B_G$ is the base number of Gauss points. Therefore, higher-order quadrature
rules are virtually unusable for subdivision, and the total number of
subdivisions never exceeds $3$. Fortunately, in our case it was sufficient to
use the single point (barycenter) integration scheme. Even still, this comes at
the high cost of $64$ Gauss points for the third subdivision. The percentage of
subdivisions that occur for an adaptation loop with $\epsilon$ from Algorithm
\ref{alg:sub-res} set to $0.05$ are reported Figure \ref{tab:subdiv}. By the
tenth iteration, additional subdivision is not significant.

\begin{table}[h]
  \footnotesize
  \centering
  \begin{tabular}{|r|l|l|}
    \hline
    it. & 2 sub. \%& 3 sub. \%\\
    \hline
    1   &  7.55    & 4.02     \\
    5   & 10.69    & 2.69     \\
    10  &  1.79    & 0.88     \\
    20  &  1.49    & 0.77     \\
    \hline
    \end{tabular}   
    \caption{Percentage of elements where additional subdivision occurs at each
      iteration of the global adaptation step.}
  \label{tab:subdiv}
\end{table} 

We remark that subdivision integration is not necessary if adapting using a
metric. There, the residual is calculated only once to compute the metric so
that issues such as (\ref{res:explode}) will not be seen during adaptation.
Furthermore, when computing the metric, the residual term is often left out
altogether to save computational time, as is done in
\cite{burpic03}. Theoretically, this simplification can be justified for the
Laplace equation as proven in \cite{kunver00}. In the case of element-based
adaptation, we found that including the residual term was necessary, since
experiments with removing the residual term generally resulted in meshes of poor
quality.



\paragraph{Global error comparison.}
\label{par:error-glob}

In Figures \ref{wave-both-energy} and \ref{wave-both-l2}, we record the error
convergence for $u_2$ for $\alpha=100$ and $\alpha=1000$. The results are very
similar to that for $u_1$: for the energy norm, the results are close, with the
element-based residual method reporting the lowest error, while for the $L^2$
error, as in Section \ref{subsec:first}, the hierarchical method reports the
lowest. The results for the $L^2$ error for $\alpha=1000$ will be discussed in
some detail here, for they clearly highlight the issue of controlling the $L^2$
norm with an estimator for the $H^1$ seminorm. We found that the $L^2$ error
oscillates over consecutive iterations when adapting with the residual in some
situations. To illustrate this issue, we reported the results in a different way
in Figure \ref{wave-both-l2}. For each target error, after the number of local
modifications and vertices has stabilized, we take the smallest and largest
error after $10$ further adaptation iterations, giving an upper and lower
envelope. With the exception of the residual element-based, where we see a
persistent spread of about $5$ to $10$\%, the envelope becomes narrow as the
number of nodes increases.

\begin{figure}[h]
  \centering
  \subfloat{
    \label{wave2-energy}
    \includegraphics[width=0.50\textwidth]{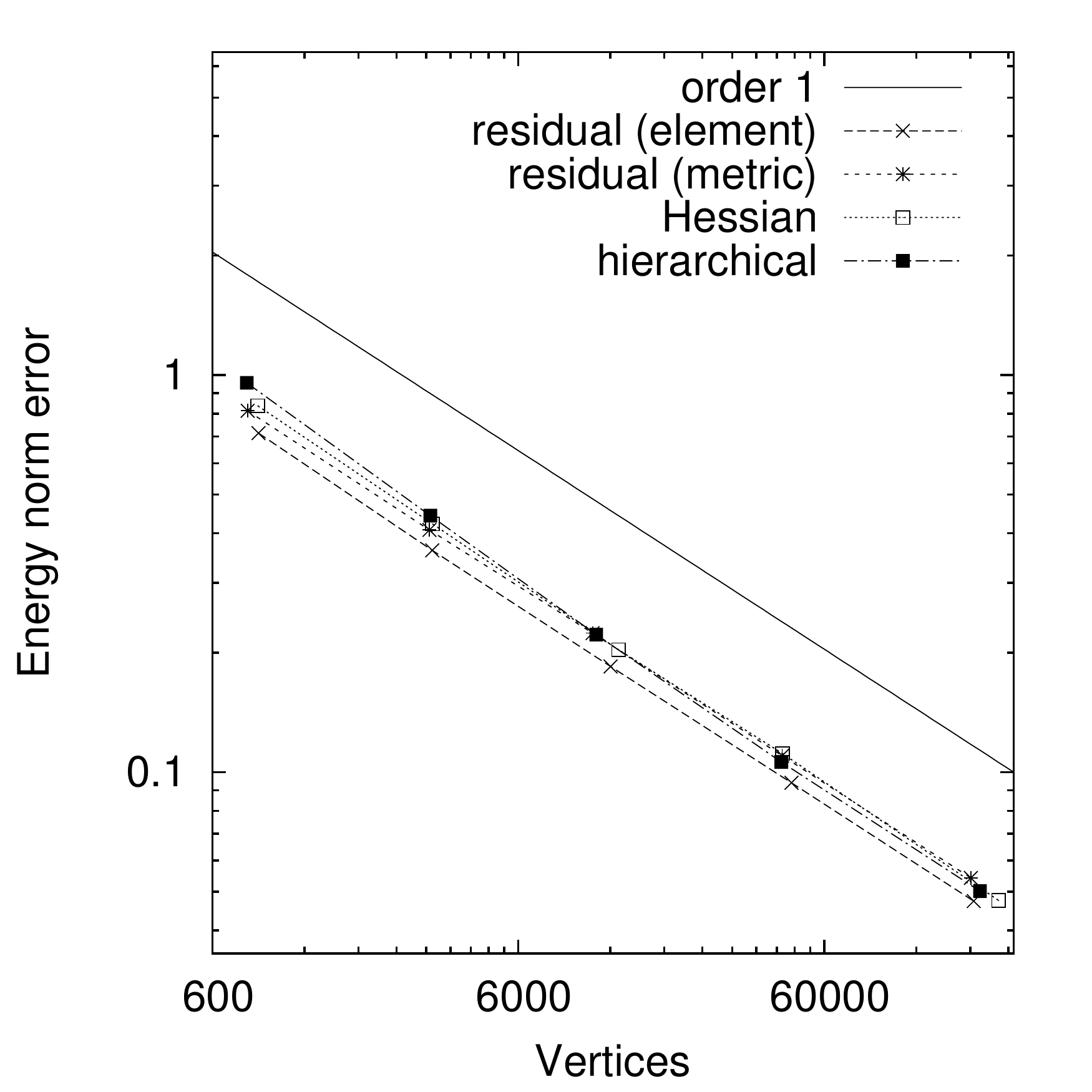}
  }
  \subfloat{
    \label{wave1-energy}
    \includegraphics[width=0.50\textwidth]{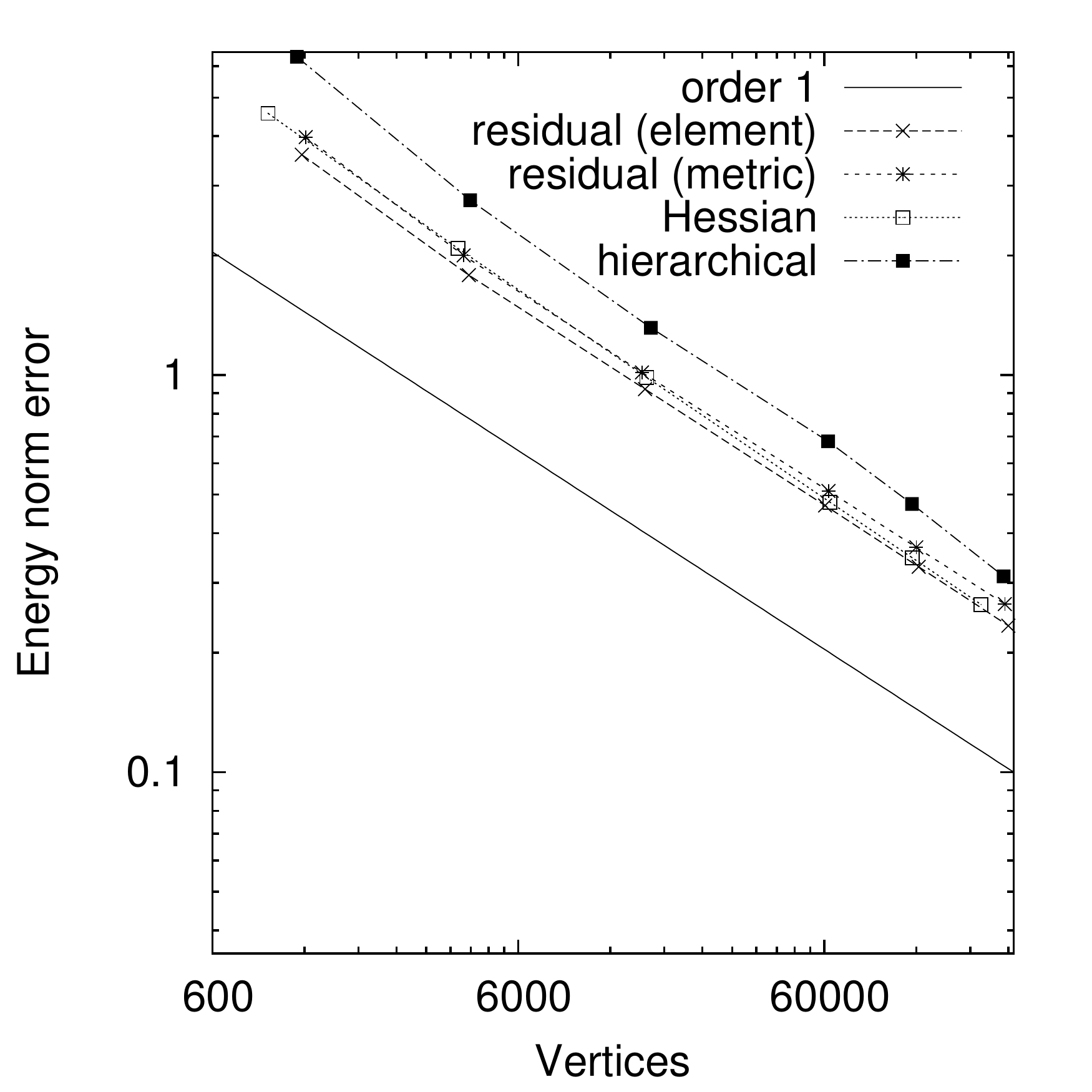}
  }
  \caption{Energy norm error calculations for $u_2$ with $\alpha=100$ (left)
    and $\alpha=1000$ (right).}
  \label{wave-both-energy}
\end{figure}
\begin{figure}[h]
  \centering
  \subfloat{
    \label{wave2-L2}
    \includegraphics[width=0.50\textwidth]{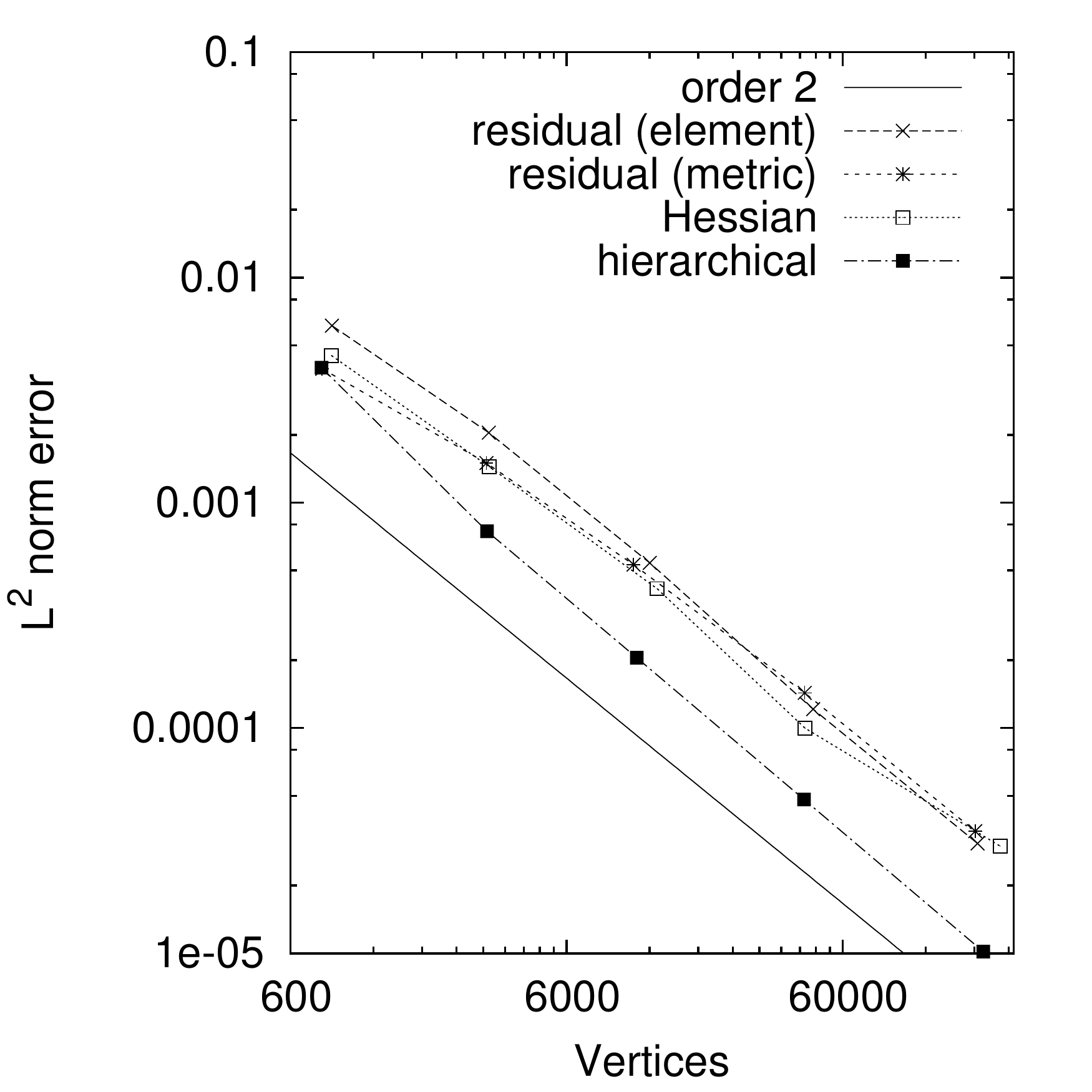}
  }
  \subfloat{
    \label{wave1-L2}
    \includegraphics[width=0.50\textwidth]{{{wave1/L2_envelope}}}
  }
  \caption{$L^2$ norm error calculations for $u_2$ with $\alpha=100$ (left) and
    $\alpha=1000$ (right). The plot on the right depicts the envelope of the
    oscillating error.}
  \label{wave-both-l2}
\end{figure}


In Figure \ref{fig:wave1-errdist}, we illustrate that the oscillation is due to
a few outlier elements, appearing just before and after the wave-front. These
elements account for a significant percentage of the overall error, and at each
iteration, slight variations in this region cause significant fluctuation in the
error. From the Figure \ref{wave1-L2} we see that for coarse meshes, this
instability arises for all methods. For the hierarchical method, as we decrease
the target error, the region is refined, and the $L^2$ error stabilizes. The
lack of stability for the $L^2$ error in the case of the residual estimator is
the result of two combined factors. First, the estimator does not detect the
fact that the $L^2$ error is still quite large outside the wave-front, and
therefore, even at very fine meshes of over $250000$ vertices, the mesh is not
refined in those regions. This observation fits within the context of
Proposition \ref{prop-l2} very well, because while we have equidistributed the
$H^1$ seminorm error over the elements, the value of $\lambda_{2,K}$ is much
smaller for elements at the wave-front, which predicts that the $L^2$ error
should also be much lower. The other contributing factor is that the mesh is not
completely stationary in this region, so that slight variations in the mesh,
which barely registered as far as the $H^1$ seminorm is concerned, cause large
variations in the $L^2$ error.

\begin{figure}[h]
  \centering 
  \subfloat{
    \label{figure-wave1-errdist}
    \includegraphics[height=0.2\textheight]{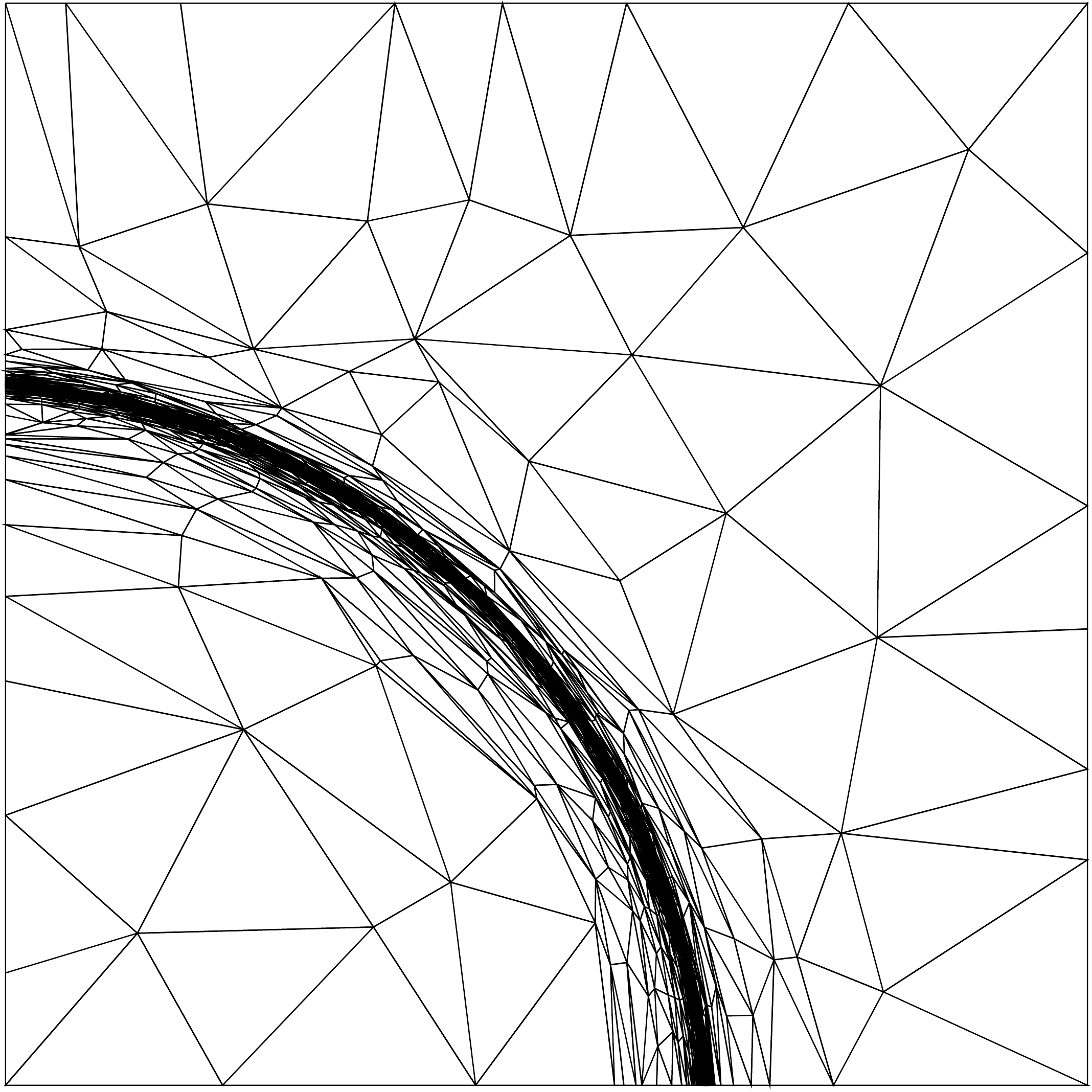}
  }
  ~
  \subfloat{
    \label{wave1-errdist-hier}
    \includegraphics[height=0.2\textheight]{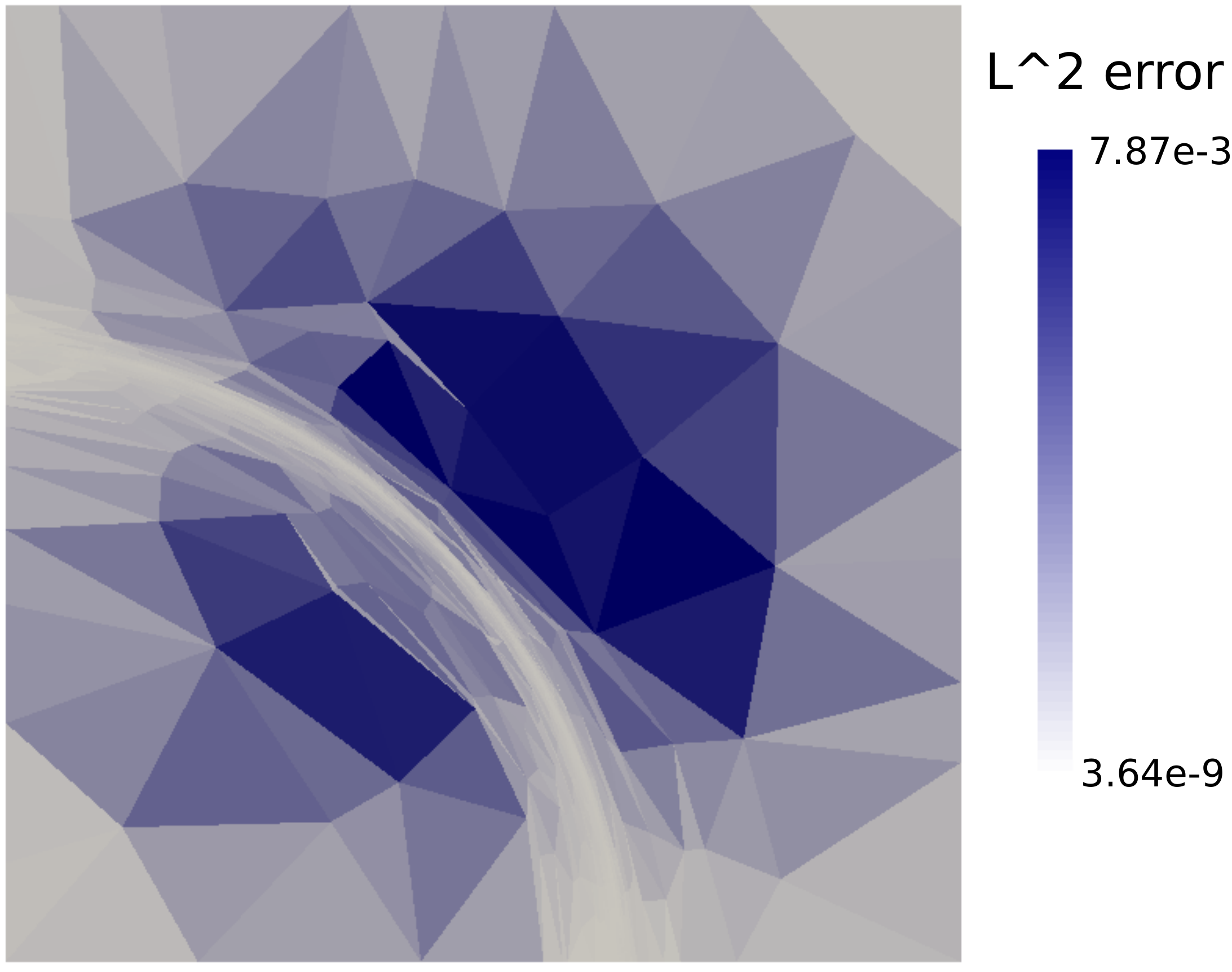}
  } 
  \caption{Mesh with about $1100$ vertices adapted with the hierarchical method
    (left) and the distribution of the exact $L^2$ error (right).}
  \label{fig:wave1-errdist}
\end{figure}


\section{Conclusion}
\label{sec:con}


We introduced an element-based mesh adaptation method for the anisotropic a
posteriori error estimator appearing in \cite{pic03a}. The method is done by
interfacing with the hierarchical estimator driver from MEF++, as introduced in
\cite{boiforfor12}. We tested the method in numerical test cases that feature
significant anisotropic behaviour and verified that the adaptation algorithm
produces anisotropic meshes and converges. Additionally, we considered an $L^2$
norm error variant of the estimator, which, under some hypotheses on the mesh,
is equivalent to the exact $L^2$ error. Numerical examples were provided to
confirm the equivalence with the exact error. Examples of adapted meshes using
the modified estimator were provided, and it was found to give improved
performance for control of the $L^2$ error over the original estimator.

The new element-based method was compared with three existing anisotropic mesh
adaptation methods for $P_1$ finite elements: residual metric based, Hessian
metric, and hierarchical. In terms of controlling the level of error with
respect to degree of freedom, the new method generally performed slightly better
for the energy norm, while the hierarchical method performed significantly
better than the other methods for the $L^2$ norm. However, the new method is
significantly more expensive from a computational standpoint. We note that the
results for both element and metric based methods for the residual estimator
were generally very close for both norms. Given the results presented in Section
\ref{sec:numer}, it seems likely that the method that obtains a given level of
error in the energy norm in the shortest time would be the residual metric
method, while for $L^2$ error it would be one of the residual metric or Hessian
methods.

Currently the authors are working on optimizing the computational aspects of the
method to make it more competitive with the other methods in terms of CPU
efficiency. Additionally, an investigation is being made to determine why the
element residual cannot be dropped from the computation, as for instance in
\cite{burpic03}.

\section*{Acknowledgements}

The authors would like to acknowledge the financial support of an Ontario
Graduate Scholarship (OGS) and by Discovery Grants of the Natural Sciences and
Engineering Research Council of Canada (NSERC).  The authors wish to thank the
professionals and researchers at GIREF, in particular Thomas Briffard, {\'E}ric
Chamberland, Andr{\'e} Fortin, and Cristian Tibirna, for making available
their code MEF++ and for their assistance in using the code during visits to the
laboratory at Universit\'e Laval and email correspondences.


\bibliographystyle{plain}
\bibliography{../../bib/list}

\begin{thebibliography}{10}

\bibitem{ainode97}
M.~Ainsworth and J.T. Oden.
\newblock A posteriori error estimation in finite element analysis.
\newblock {\em Comput. Methods Appl. Mech. Engrg.}, 142:1--88, 1997.

\bibitem{babrhe78a}
I.~Babuska and W.~Rheinboldt.
\newblock Error estimates for adaptive finite element method computations.
\newblock {\em Int. J. Numer. Methods Engrg.}, 12:1597--1615, 1978.

\bibitem{babrhe78b}
I.~Babuska and W.~Rheinboldt.
\newblock A posteriori error estimatimates for the finite element method.
\newblock {\em Int. J. Numer. Methods Engrg.}, 12:1597--1615, 1978.

\bibitem{belforcha04}
Y.~Belhamadia, A.~Fortin, and E.~Chamberland.
\newblock Anisotropic mesh adaptation for the solution of the stefan problem.
\newblock {\em J. Comput. Phys.}, 194(1):233--255, 2004.

\bibitem{boiforfor12}
R.~Bois, M.~Fortin, and A.~Fortin.
\newblock A fully anisotropic mesh adaptation method based on a hierarchical
  error estimator.
\newblock {\em Comput. Methods Appl. Mech. Engrg.}, 209-212:12--27, 2012.

\bibitem{boithes}
Richard Bois.
\newblock {\em Adaptation de maillages anisotropes par un estimateur d'erreur
  hierarchique}.
\newblock PhD thesis, Universite Laval, 2012.

\bibitem{borgeoheclausal97}
H.~Borouchaki, P.~George, F.~Hecht, P.~Laug, and E.~Saltel.
\newblock Delaunay mesh generation governed by metric specifications. {P}art
  {I}. {A}lgorithms.
\newblock {\em Finite Elem. Anal. Des.}, 25(1-2):61--83, 1997.

\bibitem{boupicalalos09}
Y.~Bourgault, M.~Picasso, F.~Alauzet, and A.~Loseille.
\newblock On the use of anisotropic a posteriori error estimates for the
  adaptative solution of {3D} inviscid compressible flows.
\newblock {\em Int. J. Numer. Meth. Fluids}, 59:47--74, 2009.

\bibitem{burpic03}
E.~Burman and M.~Picasso.
\newblock Anisotropic, adaptive finite elements for the computation of a
  solutal dentrite.
\newblock {\em Interfaces and Free Boundaries}, 5:103--127, 2003.

\bibitem{carver99}
C.~Carstensen and R.~Verf{\"u}rth.
\newblock Edge residuals dominate a posteriori error estimates for low order
  finite element methods.
\newblock {\em SIAM J. Numer. Anal.}, 27:1571--1587, 1999.

\bibitem{dasforper15}
Franco Dassi, Simona Perotto, and Luca Formaggia.
\newblock A priori anisotropic mesh adaptation on implicitly defined surfaces.
\newblock {\em SIAM J. Sci. Comput.}, 37(6):A2758--A2782, 2015.

\bibitem{daz91}
E.~D'Azevedo.
\newblock Optimal triangular mesh generation by coordinate transformation.
\newblock {\em SIAM J. Sci. Stat. Comput.}, 12(4):755--786, 1991.

\bibitem{dazsim89}
E.~D'Azevedo and R.~Simpson.
\newblock On optimal interpolation triangle incidences.
\newblock {\em SIAM J. Sci. Stat. Comput.}, 10(6):1063--1075, 1989.

\bibitem{dobthes}
C.~Dobrzynski.
\newblock {\em Adaptation de maillage anisotrope {3D} et application {\`a}
  l'a{\'e}ro-thermique des b{\^a}timents}.
\newblock PhD thesis, Universit{\'e} Pierre et Marie Curie, 2005.

\bibitem{forper01}
L.~Formaggia and S.~Perotto.
\newblock New anisotropic a priori error estimates.
\newblock {\em Numer. Math.}, 89:641--667, 2001.

\bibitem{forper03}
L.~Formaggia and S.~Perotto.
\newblock Anisotropic error estimates for elliptic problems.
\newblock {\em Numer. Math.}, 94:67--92, 2003.

\bibitem{fregeo08}
P.~Frey and P.L. George.
\newblock {\em Mesh Generation: Application to Finite Elements}.
\newblock Wiley, Hoboken, NJ, 2nd edition, 2008.

\bibitem{geo03}
P.L. George.
\newblock Gamanic3d, adaptive anisotropic tetrahedral mesh generator.
\newblock Technical note, INRIA, Paris, 2003.

\bibitem{gir}
GIREF.
\newblock \url{http://giref.ulaval.ca/}.

\bibitem{golvan96}
G.~Golub and C.~Van Loan.
\newblock {\em Matrix Computations}.
\newblock Johns Hopkins Studies in the Mathematical Sciences. Johns Hopkins
  University Press, Baltimore, MD, 3rd edition, 1996.

\bibitem{habdombouaitforval00}
W.~Habashi, J.~Dompierre, Y.~Bourgault, D.~Ait-Ali-Yahia, M.~Fortin, and
  M.~Vallet.
\newblock Anisotropic mesh adaptation: towards user-independent,
  mesh-independent and solver-independent cfd. part {I}: {G}eneral principles.
\newblock {\em Int. J. Numer. Meth. Fluids}, 32:725--744, 2000.

\bibitem{hec06}
F.~Hecht.
\newblock {BAMG}: Bidimensional anisotropic mesh generator.
\newblock \url{http://www.ann.jussieu.fr/hecht/ftp/bamg/bamg.pdf}.

\bibitem{kunver00}
G.~Kunert and R.~Verf{\"u}rth.
\newblock Edge residuals dominate a posteriori error estimates for linear
  finite element methods on anisotropic triangular and tetrahedral meshes.
\newblock {\em Numer. Math.}, 86(2):283--303, 2000.

\bibitem{laubou03}
P.~Laug and H.~Bourochaki.
\newblock {BL2D-V2} : mailleur bidimensionnel adaptatif.
\newblock Technical Report RT-0275, INRIA, Paris, 2003.

\bibitem{losala11a}
A.~Loseille and F.~Alauzet.
\newblock Continuous mesh framework part {I}: {W}ell-posed continuous
  interpolation error.
\newblock {\em SIAM J. Numer. Anal.}, 49(1):38--60, 2011.

\bibitem{losala11b}
A.~Loseille and F.~Alauzet.
\newblock Continuous mesh framework part {II}: {V}alidations and applications.
\newblock {\em SIAM J. Numer. Anal.}, 49(1):61--86, 2011.

\bibitem{lozpicpra09}
A.~Lozinski, M.~Picasso, and V.~Prachittham.
\newblock An anisotropic error estimator for the {C}rank-{N}icolson method:
  applications to a parabolic problem.
\newblock {\em SIAM J. Sci. Comput.}, 31(4):2757--2783, 2009.

\bibitem{micper06}
S.~Micheletti and S.~Perotto.
\newblock Reliability and efficiency of an anisotropic {Z}ienkiewicz-{Z}hu
  error estimator.
\newblock {\em Comput. Methods Appl. Mech. Engrg.}, 195:799--835, 2006.

\bibitem{micperpic03}
Stefano Micheletti, Simona Perotto, and Marco Picasso.
\newblock Stabilized finite elements on anisotropic meshes: a priori error
  estimates for the advection-diffusion and the {S}tokes problems.
\newblock {\em SIAM J. Numer. Anal.}, 41(3):1131--1162 (electronic), 2003.

\bibitem{mit10}
W.~Mitchell.
\newblock A collection of {2D} elliptic problems for testing adaptive
  algorithms.
\newblock Technical report, NISTIR 7668, 2010.

\bibitem{yams}
J~Pascal.
\newblock A fully automatic adaptive isotropic surface remeshing procedure.
\newblock 2001.

\bibitem{pic03a}
M.~Picasso.
\newblock An anisotropic error indicator based on {Z}ienkiewicz-{Z}hu error
  estimator: {A}pplication to elliptic and parabolic problems.
\newblock {\em SIAM J. Sci. Comput.}, 24(4):1328--1355, 2003.

\bibitem{pic06}
M.~Picasso.
\newblock Adaptive finite elements with large aspect ratio based on an
  anisotropic error estimator involving first order derivatives.
\newblock {\em Comput. Methods Appl. Mech. Engrg.}, 916:14--23, 2006.

\bibitem{quaval97}
A.~Quarteroni and A.~Valli.
\newblock {\em Numerical Approximation of Partial Differential Equations}.
\newblock Number~23 in Springer Series in Computational Mathematics.
  Springer-Verlag, 1997.

\bibitem{rod94}
R.~Rodriguez.
\newblock Some remarks on {Z}ienkiewicz-{Z}hu estimator.
\newblock {\em Numer. Methods Partial Different. Equat.}, 10:625--635, 1994.

\bibitem{zhanag05}
Z.~Zhang and A.~Naga.
\newblock A new finite element gradient recovery method: {S}uperconvergence
  property.
\newblock {\em SIAM J. Sci. Comput.}, 26(4):1192--1213, 2005.

\end{thebibliography}

\end{document}